\documentclass[11pt,a4paper]{article}

\usepackage[
    left=27mm,
    right=27mm,
    top=24mm,
    bottom=25mm
]{geometry}

\usepackage[
    backend=biber,
    style=numeric,
    sorting=nyt,
    giveninits=true,
    maxbibnames=99,
    doi=true,
    url=true,
    isbn=false
]{biblatex}
\usepackage{mathtools}
\allowdisplaybreaks

\usepackage{amssymb}

\usepackage[mathscr]{eucal}

\usepackage{mismath}

\usepackage{bm}

\usepackage{graphicx}
\graphicspath{{figs/}}

\usepackage{tikz}
\usetikzlibrary{calc,math}

\usepackage{subcaption}

\usepackage{multirow}
\usepackage{makecell}
\setcellgapes{2pt}

\usepackage{booktabs}

\usepackage{siunitx}
\usepackage{algorithm}
\usepackage{algpseudocode}

\usepackage{relsize}

\newcommand*{\proofheading}[1]{%
  \smallskip\noindent\emph{#1}\nobreak\enspace\ignorespaces
}

\DeclareMathOperator{\Dist}{dist}
\DeclareMathOperator{\Span}{span}

\DeclareMathOperator{\Supp}{supp}

\DeclareMathOperator{\Diam}{diam}

\DeclarePairedDelimiter{\floor}{\lfloor}{\rfloor}

\usepackage[skins]{tcolorbox}

\newtcolorbox{justabox}[2][]{%
  enhanced,
  attach boxed title to top center={
    yshift=-3mm,
    yshifttext=-1mm
  },
  colframe=blue!75!black,
  colbacktitle=red!80!black,
  fonttitle=\bfseries,
  title={#2},
  #1
}

\DeclareSymbolFont{wasycustom}{U}{wasy}{m}{n}
\DeclareMathSymbol{\ocircle}{\mathbin}{wasycustom}{"23}

\usepackage{hyperref}

\usepackage{amsthm}
\usepackage[capitalise,nameinlink,noabbrev]{cleveref}
\crefname{equation}{}{}
\Crefname{equation}{}{}
\hypersetup{
    colorlinks=true,
    linkcolor=blue!45!black,
    citecolor=blue!45!black,
    urlcolor=blue!45!black,
    pdfauthor={Changqing Ye},
    pdftitle={An Order-One Lower Bound on the Error of Scalable Generalized Multiscale Finite Element Space Constructions},
    pdfsubject={An order-one error lower bound for scalable generalized multiscale finite element space constructions over rough coefficients},
    pdfkeywords={generalized finite element method, multiscale finite element method, numerical homogenization, rough coefficients, localization, error lower bounds}
}

\theoremstyle{plain}
\newtheorem{theorem}{Theorem}[section]
\newtheorem{lemma}[theorem]{Lemma}
\newtheorem{proposition}[theorem]{Proposition}

\theoremstyle{definition}
\newtheorem{definition}[theorem]{Definition}

\theoremstyle{remark}
\newtheorem{remark}[theorem]{Remark}

\numberwithin{equation}{section}

\newcommand{\Torus}{\mathbb{T}^d}
\newcommand{\Per}{\mathrm{per}}

\newcommand{\shorttitle}{Lower Bound for Scalable Multiscale Spaces}

\title{An Order-One Lower Bound on the Error of Scalable Generalized Multiscale Finite Element Space Constructions}
\author{Changqing Ye\thanks{Corresponding author.
Email:
\href{mailto:ye_changqing@outlook.com}{ye\_changqing@outlook.com}.}\\[2pt]
\small Laboratory of Computational Physics,\\[-1pt]
\small Institute of Applied Physics and Computational Mathematics, Beijing, China}
\date{}

\begin{document}

\pagestyle{myheadings}
\markboth{C. Ye}{\shorttitle}
\maketitle

\begin{abstract}
Several coefficient-adapted methods provide optimal-order approximation for elliptic equations with rough coefficients.
Prominent examples include localized orthogonal decomposition, multiscale spectral GFEM, and constraint energy-minimizing GMsFEM.
Their proven accuracy, however, is obtained by allowing the localization radius or the local spectral dimension to grow as the coarse scale \(H\) tends to zero.
Classical MsFEM has an FEM-like local construction, but its available analysis does not give a coefficient-uniform \(\bigO(H)\) energy estimate over the full bounded-contrast measurable coefficient class.
Motivated by this gap, we formalize an FEM-like notion of structural scalability.
A chosen spatially local basis has uniformly bounded overlap, hence \(\bigO(1)\) stiffness entries per row, and every anchored local span uses coefficient information from only \(\bigO(1)\) coarse-element layers.
We prove that no deterministic construction satisfying fixed bounds on the support radius, coefficient-information radius, and local multiplicity can converge uniformly over the coefficient class.
In fact, its worst-case \(L^2\)-to-energy Galerkin error remains bounded below by a positive constant independent of \(H\).
The lower bound is established using a fixed finite family of smooth periodic coefficients and smooth right-hand sides.
The proof combines coefficients that coincide on local patches, a finite-dimensional approximation lower bound for corrector fields, a positive-density mesh argument, and strong periodic corrector convergence.
Thus uniform optimal accuracy requires at least one local construction parameter to grow or requires coefficient information beyond the fixed-visibility model.
\end{abstract}

\noindent\textbf{Keywords.}
Generalized finite element method; multiscale finite element method; numerical homogenization; rough coefficients; localization; error lower bounds.

\medskip
\noindent\textbf{2020 Mathematics Subject Classification.}
35B27; 65N15; 65N30.

\section{Introduction}

We consider the Dirichlet problem
\begin{equation}
    -\nabla\cdot(\kappa\nabla u)=f \quad\text{in }\Omega, \qquad u=0 \quad\text{on }\partial\Omega,
    \label{eq:introduction-pde}
\end{equation}
where \(\kappa\) is measurable and satisfies \(1\le\kappa\le\rho\) for a fixed \(\rho>1\).
Without further regularity of \(\kappa\), standard finite element error analysis cannot use a coefficient-uniform \(H^2\)-regularity estimate.
Several multiscale constructions nevertheless recover optimal-order energy approximation by building coefficient-adapted trial spaces.
Their approximation results raise the central question of this paper.
Can the same accuracy be achieved while the localization radius, local dimension, and coefficient-information radius remain bounded under refinement?

To place this question in context, we review four representative multiscale constructions and the approximation results available for them.
The comparison focuses on whether the proven localization depth and local spectral dimension remain bounded as \(H\to0\).

\paragraph{LOD and SLOD.}
Localized orthogonal decomposition splits a standard coarse space from a coefficient-dependent fine-scale kernel and corrects the coarse functions by energy-orthogonal fine-scale solves.
The ideal correctors are global.
Exponential decay permits their truncation to oversampling patches, and \(\bigO(H)\) energy accuracy follows when the number of coarse-element layers is chosen of order \(\lvert\log H\rvert\) \cite{Maalqvist2014}.
Thus LOD gives coefficient-robust optimal order, but its proven localization radius grows as \(H\to0\).
In computations, the rough coefficient and the corrector problems are typically resolved on a fine mesh of size \(h\), which is held fixed while \(H\) is varied within the two-level regime \(h\ll H\).
If an LOD corrector uses \(k\) coarse layers, its patch contains \(\bigO(k^d)\) coarse elements and, on quasi-uniform meshes, approximately \(\bigO((kH/h)^d)\) fine-grid unknowns.
Compared with a fixed-layer local solve on the same fine grid, the local problem is therefore larger by a factor of order \(k^d\), which becomes \(\lvert\log H\rvert^d\) when \(k\simeq\lvert\log H\rvert\).
The comparison is with an elementwise construction on a fixed number of coarse layers.
Measured in coarse elements, the size of an LOD corrector problem grows like \(\lvert\log H\rvert^d\), so the local construction stencil is not bounded uniformly under coarse refinement.
The Super-Localized Orthogonal Decomposition (SLOD) modifies the localized right-hand sides to obtain much faster observed decay \cite{Hauck2023}.
Under the spectral-geometric condition used in its super-localization argument, a localization radius of order \(H\lvert\log H\rvert^{(d-1)/d}\) is sufficient, improving on the LOD radius \(H\lvert\log H\rvert\) but still using a growing number of coarse layers.
The full super-exponential decay is not presently an unconditional coefficient-uniform theorem; the unconditional a priori analysis available for SLOD-type constructions recovers at least the classical LOD localization rate \cite{Freese2024}.

\paragraph{CEM-GMsFEM.}
CEM-GMsFEM first solves local spectral problems to identify auxiliary modes associated with high-conductivity channels and then constructs energy-minimizing basis functions subject to constraints imposed by those modes \cite{Chung2018}.
The localized minimizations are performed on oversampling regions.
When the auxiliary space contains the required small-eigenvalue modes and the oversampling depth is suitably chosen, the method has an energy error proportional to \(H\), with constants independent of the contrast in the regime covered by the analysis.
The original localization theorem uses an oversampling depth of order \(\log(\max\kappa/H)\); for the bounded-contrast class considered here, this still grows like \(\lvert\log H\rvert\).

\paragraph{Spectral generalized finite element methods.}
These methods instead reduce local approximation spaces by spectral compression.
MS-GFEM constructs local \(A\)-harmonic approximation spaces on oversampling regions, selects leading modes of compact transfer or restriction operators, and couples the local spaces by a partition of unity \cite{Babuska2011}.
A recent unified analysis proves local errors of order \(\exp(-cn^{1/d})\), under its abstract assumptions, for a broad class of multiscale problems \cite{Ma2026}.
For the elliptic problem considered here, this gives \(\bigO(H)\) global energy accuracy with a sufficient local spectral dimension \(n\) of order \(\lvert\log H\rvert^d\).
Related edge-based and ring-spectral constructions use the same compression principle \cite{Hou2016,Alber2025}.
The Super-Localized Generalized Finite Element Method combines SLOD-type snapshots with a partition of unity and local spectral compression \cite{Freese2024}.
It can keep the support patches fixed by increasing the number of basis functions per coarse entity.
Thus these spectral methods trade a growing localization radius for a growing local spectral dimension.

\paragraph{Classical MsFEM.}
This method is closer to the local architecture of standard FEM.
Its basis functions are computed by coefficient-dependent harmonic problems on individual coarse elements and are coupled through coarse nodal data \cite{Hou1997}.
Its basis has fixed coarse support and uses fixed-patch coefficient information.
In the periodic setting, however, the energy estimate contains a resonance contribution; for example, Hou, Wu, and Cai obtain a term of order \((\varepsilon/H)^{1/2}\) in addition to the coarse discretization error \cite[Theorem~5.1]{Hou1999}.
Oversampling reduces the effect of artificial local boundary data, but the classical theory does not provide a coefficient-uniform \(\bigO(H)\) estimate over the full measurable class considered here.
Fixed-support optimal estimates are known for special rough subclasses, such as unidirectional coefficients \cite{Babuska1994}, but not for general \(\kappa\in\mathcal K_\rho\).

The preceding methods establish that optimal approximation of elliptic problems with rough coefficients is possible, but their proven constructions do not have the same refinement-independent local complexity bounds as low-order FEM.
On a shape-regular mesh, a standard FEM basis has \(\bigO(1)\) functions per element, \(\bigO(1)\) overlap, and hence \(\bigO(1)\) nonzero stiffness entries per row; each basis function is also determined from \(\bigO(1)\) neighboring coarse elements.
We use these two features to define \emph{FEM-like structural scalability} for a coefficient-adapted construction:
\begin{enumerate}
    \item a chosen spatially local basis has fixed support radius and fixed local multiplicity, which imply \(\bigO(1)\) stiffness entries per row;
    \item each anchored local span depends only on the coefficient in a fixed number of coarse-element layers.
\end{enumerate}
The second condition means dependence on coefficient data from \(\bigO(1)\) coarse-element layers, not dependence on only finitely many scalar samples of the coefficient.
The rule may use the entire restriction of the measurable coefficient to each coefficient-information patch and may process these data in an arbitrarily nonlinear, discontinuous, or noncomputable way.
Matrix sparsity alone would not be an intrinsic condition, because a global change of basis can diagonalize the stiffness matrix; the chosen basis must also be spatially local.

Formally, every basis function is assigned to an anchor element, is supported within \(m\) layers of that anchor, and shares the anchor with at most \(C_{\mathrm{loc}}\) basis functions.
The anchored span may depend on the coefficient only through its restriction to the corresponding \((m+k)\)-layer patch.
The parameters \(m\), \(k\), and \(C_{\mathrm{loc}}\) are fixed independently of \(H\).
This fixed-visibility requirement distinguishes local construction from support locality alone.
A compactly supported function may still have been designed using coefficient information from the whole domain.

With these notions in place, the main result can be stated informally as follows.
No deterministic FEM-like scalable construction in the preceding sense converges uniformly over the full bounded-contrast coefficient class.
More strongly, for every such construction and every sufficiently fine scale, there is a coefficient for which the normalized worst-case energy error is bounded below by a positive constant independent of \(H\).
Thus the failure is not merely a loss of the optimal \(\bigO(H)\) rate.
The worst-case error does not tend to zero.

The lower bound uses a fixed finite family.
Before the scale and the construction rule are specified, the proof fixes finitely many smooth periodic coefficient profiles and smooth compactly supported right-hand sides.
At every sufficiently small \(H\), at least one pair consisting of a coefficient profile and a right-hand side gives the stated error lower bound.
The coefficient period is \(\varepsilon_H=LH\), where \(L>1\) is fixed after the coefficient-information radius; thus the coefficient period remains proportional to the coarse mesh size, which coincides with the resonance regime of classical MsFEM.

The proof has four steps.
\begin{enumerate}
    \item Support locality and bounded anchor multiplicity give a uniform dimension bound for the restriction of the selected space to one mesh element.
        Local dependence on the coefficient makes that restriction identical for coefficients agreeing on a slightly larger patch.
    \item Small coefficient perturbations placed outside an open core of a periodicity cell generate arbitrarily many linearly independent corrector fields inside the core.
        This produces a finite family of coefficients that agree on the relevant local patches but whose solution gradients cannot all be approximated by a single space of fixed dimension.
    \item On an arbitrary quasi-uniform, shape-regular simplicial mesh, a positive fraction of the elements have their coefficient-information patches contained in periodically repeated cores.
    \item A compactly supported homogenized solution, affine on an interior cube, realizes the corrector fields in exact solutions with smooth right-hand sides in \(L^2(\Omega)\).
        Strong corrector convergence transfers the finite corrector family to exact solutions, and summation over a positive fraction of the mesh gives an order-one global Galerkin error.
\end{enumerate}

The theorem is an approximation lower bound under a restriction on local coefficient dependence, not a runtime or storage lower bound.
It concerns deterministic rules with exact fixed-visibility consistency and the minimax order \(\inf_{\mathscr M}\sup_\kappa\).
It does not settle the support-only problem with order \(\sup_\kappa\inf_{V_H}\), where a compactly supported basis may be redesigned after the whole coefficient is known.
That globally informed problem remains unresolved.

The remainder of the paper is organized as follows.
\cref{sec:formulation} formulates the information model and states the main theorem.
\cref{sec:local-rank} derives the local dimension bound implied by fixed visibility.
\crefrange{sec:finite-family}{sec:homogenization} construct the finite corrector family, identify a positive-density set of elements whose coefficient-information patches are contained in the cores, and realize the corrector fields through exact solutions with smooth right-hand sides.
\cref{sec:main-proof} combines these results.
Finally, \cref{sec:scope} returns to the scalability consequence, quantifier order, and boundary with globally informed constructions.

\section{Problem formulation and main result}
\label{sec:formulation}

\paragraph{Notation.}
We use \(\mathbb{N}_0\coloneqq\{0,1,2,\dots\}\), write \(M\Subset S\) when \(\overline M\) is a compact subset of \(S\), and use \(\lvert E\rvert\) and \(\#\mathcal F\) for the Lebesgue measure of \(E\) and the cardinality of a finite set \(\mathcal F\), respectively.
If \(X\) is a Hilbert space and \(Z\subset X\), then
\[
    \Dist_X(v,Z) \coloneqq \inf_{z\in Z}\norm{v-z}_X.
\]
By contrast, \(\Dist(x,E)\) denotes Euclidean point-to-set distance.
The notation \([w]\), used in the Poincar\'e inequality below, denotes the equivalence class of \(w\) modulo constants, and we set \(r_+\coloneqq\max\{r,0\}\).
After introducing the periodicity cell \(Y\), we use the subscript \(\Per\) to denote \(Y\)-periodicity.
We write \(a\lesssim b\) when \(a\le cb\) with a constant \(c\) independent of the parameters specified in the relevant statement.

\subsection{Elliptic solution operator and coarse meshes}

Let \(d\ge2\), let \(\Omega\subset\mathbb R^d\) be a bounded, connected, polyhedral Lipschitz domain, and fix \(\rho>1\).
Define
\[
    \mathcal K_\rho \coloneqq \bigl\{ \kappa\in L^\infty(\Omega) \bigm| 1\le\kappa\le\rho\ \text{a.e. in }\Omega \bigr\}.
\]
For \(\kappa\in\mathcal K_\rho\), set
\[
    a_\kappa(u,v) \coloneqq \int_\Omega\kappa\nabla u\cdot\nabla v\di x, \qquad \norm{v}_{a_\kappa} \coloneqq a_\kappa(v,v)^{1/2}.
\]
For \(f\in L^2(\Omega)\), Lax--Milgram gives a unique \(u_{\kappa,f}\in H_0^1(\Omega)\) satisfying
\begin{equation}
    a_\kappa(u_{\kappa,f},v) = (f,v)_{L^2(\Omega)} \qquad \bigl(v\in H_0^1(\Omega)\bigr).
    \label{eq:weak-problem}
\end{equation}

Let \(\mathcal H\subset(0,\infty)\) have \(0\) as an accumulation point.
For every \(H\in\mathcal H\), let \(\mathcal T_H\) be a conforming simplicial mesh of \(\Omega\), indexed so that
\[
    H\coloneqq\max_{T\in\mathcal T_H}\Diam(T).
\]
We assume uniform shape regularity and quasi-uniformity.
Thus there are fixed \(\gamma_{\mathrm{sh}}\ge1\) and \(c_{\mathrm{qu}}\in(0,1]\) such that
\begin{equation}
    \frac{\Diam(T)}{r_T}\le\gamma_{\mathrm{sh}}, \qquad c_{\mathrm{qu}}H\le\Diam(T)\le H \qquad \bigl(T\in\mathcal T_H\bigr),
    \label{eq:mesh-regularity}
\end{equation}
where \(r_T\) is the inradius of \(T\).
Two elements are neighbors if they share a codimension-one face.
For \(j\in\mathbb N_0\), \(\omega_j(T)\) denotes the union of all elements at face-adjacency graph distance at most \(j\) from \(T\).
Supports below are essential closed supports in \(\overline\Omega\).

\subsection{Locally supported spaces with element assignments}

We next separate the geometric support restriction from the coefficient-information restriction.
The first definition records only the trial space, its chosen basis, and the assignment of basis functions to coarse elements.
This anchored structure is needed because the later locality condition is imposed anchor by anchor, not merely on the global span.

\begin{definition}[Locally supported trial space with an element assignment]
\label{def:support-local}
Fix \(H\in\mathcal H\), \(m\in\mathbb N_0\), and \(C_{\mathrm{dim}},C_{\mathrm{loc}}>0\).
A locally supported trial space with an element assignment and parameters \((m,C_{\mathrm{dim}},C_{\mathrm{loc}})\) consists of
\[
    \begin{aligned}
        V_H&\subset H_0^1(\Omega), &\qquad \Psi_H&=\{\psi_{H,i}\}_{i=1}^{N_H},\\
        \alpha_H&\colon \{1,\dots,N_H\}\longrightarrow\mathcal T_H,
    \end{aligned}
\]
with the following properties:
\begin{enumerate}
    \item \(\Psi_H\) is a basis of \(V_H\);
    \item \(N_H=\dim V_H\le C_{\mathrm{dim}}H^{-d}\);
    \item with \(T_{H,i}\coloneqq\alpha_H(i)\), \(\Supp\psi_{H,i}\subseteq\omega_m(T_{H,i})\);
    \item for every \(T\in\mathcal T_H\), \(\#\{i\mid T_{H,i}=T\}\le C_{\mathrm{loc}}\).
\end{enumerate}
The map \(\alpha_H\) is called the \emph{anchor assignment}, and \(T_{H,i}\) is the \emph{anchor element} of \(\psi_{H,i}\).
The zero-dimensional choice \(V_H=\{0\}\), with empty basis and anchor map, is allowed.
\end{definition}

Under quasi-uniformity, the qualitative requirement \(N_H=\bigO(H^{-d})\) already follows from bounded anchor multiplicity:
\[
    N_H \le \floor{C_{\mathrm{loc}}}\,\#\mathcal T_H \lesssim H^{-d}.
\]
Thus the existence of some uniform global dimension constant is redundant.
The prescribed constant \(C_{\mathrm{dim}}\) is retained in \cref{def:support-local} so that the admissibility data match the locally supported problem discussed in \cref{sec:scope}.

The anchor is an indexing device, not an interpolation node or an assumption on how a function is computed.
It identifies the coarse element assigned to a basis function.
A basis function anchored at \(T\) may extend over all of \(\omega_m(T)\), and at most \(\floor{C_{\mathrm{loc}}}\) basis functions may share that anchor.
The anchor assignment is part of the anchored structure; the same space with a different basis or assignment need not satisfy the same locality condition.

The same assumptions give the row-sparsity part of FEM-like scalability.
A stiffness entry can be nonzero only if the two corresponding supports overlap on a set of positive measure.
The two anchor elements are then separated by at most \(2m\) face-adjacency layers.
Uniform mesh regularity and bounded anchor multiplicity therefore bound the number of nonzero stiffness entries in every row by a constant depending only on \(d\), \(m\), and \(C_{\mathrm{loc}}\), independently of \(H\) and \(\kappa\).
The converse is false because a sparse stiffness matrix alone does not imply spatially local basis functions.

The second definition introduces the coefficient-to-space rule.
It formalizes the requirement that the subspace attached to an anchor can depend only on the coefficient in a prescribed enlarged patch.
This is the fixed-visibility assumption used later to obtain a common local restriction space.

\begin{definition}[Fixed-visibility selection rule]
\label{def:rule}
Fix \(H\in\mathcal H\), \(m,k\in\mathbb N_0\), and \(C_{\mathrm{dim}},C_{\mathrm{loc}}>0\).
A deterministic fixed-visibility selection rule \(\mathscr M\), with parameters \((H,m,k,C_{\mathrm{dim}},C_{\mathrm{loc}})\), maps every \(\kappa\in\mathcal K_\rho\) to a locally supported trial space with an element assignment
\[
    \mathscr M(\kappa) \coloneqq \bigl( V_H^\mathscr M(\kappa), \Psi_H^\mathscr M(\kappa), \alpha_H^\mathscr M(\kappa) \bigr),
\]
where
\[
    \begin{aligned}
        N_H^\mathscr M(\kappa) &\coloneqq \dim V_H^\mathscr M(\kappa), &\qquad \Psi_H^\mathscr M(\kappa) &= \left\{ \psi_{H,i}^\mathscr M(\kappa) \right\}_{i=1}^{N_H^\mathscr M(\kappa)},\\
        \alpha_H^\mathscr M(\kappa) &\colon \left\{1,\dots,N_H^\mathscr M(\kappa)\right\} \longrightarrow \mathcal T_H, &\qquad T_{H,i}^\mathscr M(\kappa) &\coloneqq \alpha_H^\mathscr M(\kappa)(i).
    \end{aligned}
\]
If \(\mathscr M\) is fixed, the rule label is omitted and the resulting quantities are denoted by
\[
    \begin{aligned}
        V_H^\kappa &\coloneqq V_H^\mathscr M(\kappa), &\qquad N_H^\kappa &\coloneqq \dim V_H^\kappa,\\
        \Psi_H^\kappa &\coloneqq \Psi_H^\mathscr M(\kappa), &\qquad \psi_{H,i}^\kappa &\coloneqq \psi_{H,i}^\mathscr M(\kappa),\\
        \alpha_H^\kappa &\coloneqq \alpha_H^\mathscr M(\kappa), &\qquad T_{H,i}^\kappa &\coloneqq T_{H,i}^\mathscr M(\kappa),
    \end{aligned}
\]
where the superscript \(\kappa\) records coefficient dependence.
For \(T\in\mathcal T_H\), define the anchored subspace at \(T\) by
\[
    W_{H,T}^\kappa \coloneqq \Span \bigl\{ \psi_{H,i}^\kappa \bigm| T_{H,i}^\kappa=T \bigr\} \subset H_0^1(\Omega).
\]
The rule has visibility enlargement \(k\) if, for all \(\kappa,\widetilde\kappa\in\mathcal K_\rho\) and \(T\in\mathcal T_H\),
\begin{equation}
    \kappa=\widetilde\kappa \ \text{a.e. on }\omega_{m+k}(T) \quad\Longrightarrow\quad W_{H,T}^\kappa = W_{H,T}^{\widetilde\kappa}.
    \label{eq:visibility-consistency}
\end{equation}
\end{definition}

The selected structure is independent of \(f\).
No continuity, measurability, computability, finite-precision, translation-covariance, or setup-cost condition is imposed on \(\mathscr M\).
The support patch of a function anchored at \(T\) is \(\omega_m(T)\), and the rule may additionally depend on coefficient data from \(k\) surrounding element layers.
Accordingly, its coefficient-information patch is \(\omega_{m+k}(T)\).
For a fixed mesh and scale, the local coefficient data at an anchor \(T\) are the almost-everywhere equivalence class of the coefficient on \(\omega_{m+k}(T)\), together with the fixed mesh geometry.
The condition in \cref{eq:visibility-consistency} imposes locality on the anchored subspace rather than on a normalized or ordered basis.
It is invariant under changes of basis within one anchored span.
Mixing basis functions assigned to different anchors changes the anchored output structure and is not an invariance of the model, even if the global trial space is unchanged.
Local patch solves and local spectral constructions satisfy the condition when their boundary data, normalizations, and mode-selection criteria depend only on the coefficient-information patch; global constraints or globally computed boundary data need not satisfy it.
The theorem concerns deterministic rules.
Randomized rules require a separate expected- or high-probability formulation and are not covered.
Likewise, a tolerance-dependent construction leaves the fixed-parameter class if its retained mode count or coefficient-information radius grows as \(H\to0\).
No regularity, continuity, or computability of the coefficient-to-space map is assumed.

For a fixed finite-dimensional trial space \(V_H\), let \(u_{\kappa,f,H}\in V_H\) denote the Galerkin solution
\[
    a_\kappa(u_{\kappa,f,H},v_H) = (f,v_H)_{L^2(\Omega)} \qquad \bigl(v_H\in V_H\bigr).
\]
We define its \(L^2\)-to-energy error by
\begin{equation}
    \mathcal E_H(\kappa;V_H) \coloneqq
    \sup_{f\in L^2(\Omega)\setminus\{0\}}
    \frac{
        \norm{u_{\kappa,f}-u_{\kappa,f,H}}_{a_\kappa}
    }{
        \norm{f}_{L^2(\Omega)}
    }.
    \label{eq:operator-error}
\end{equation}

The main theorem states the lower bound in the normalization of \cref{eq:operator-error}.
The statement is uniform in the selection rule and in the mesh family; only the constants may depend on the fixed structural parameters.
\begin{theorem}[Order-one error lower bound under scalable fixed visibility]
\label{thm:main}
Let \(d\ge2\), let \(\Omega\subset\mathbb R^d\) be a bounded, connected, polyhedral Lipschitz domain, and fix \(\rho>1\), \(m,k\in\mathbb N_0\), and \(C_{\mathrm{dim}},C_{\mathrm{loc}}>0\).
Let \(\{\mathcal T_H\}_{H\in\mathcal H}\) be a mesh family satisfying \cref{eq:mesh-regularity}.
There exist constants \(c_*,H_*>0\) such that, for every \(H\in\mathcal H\) with \(0<H<H_*\) and every fixed-visibility rule \(\mathscr M\) with parameters \((H,m,k,C_{\mathrm{dim}},C_{\mathrm{loc}})\), there exists a coefficient \(\kappa_H\in\mathcal K_\rho\) for which
\begin{equation}
    \mathcal E_H\bigl(\kappa_H;V_H^\mathscr M(\kappa_H)\bigr)\ge c_*.
    \label{eq:main-lower-bound}
\end{equation}
Consequently,
\begin{equation}
    \liminf_{\substack{H\to0\\H\in\mathcal H}}
    \inf_{\mathscr M}
    \sup_{\kappa\in\mathcal K_\rho}
    \mathcal E_H\bigl(\kappa;V_H^\mathscr M(\kappa)\bigr)
    \ge c_*>0,
    \label{eq:minimax-lower-bound}
\end{equation}
where the infimum is over the rules in \cref{def:rule} with the fixed parameters above.
The constants \(c_*\) and \(H_*\) may depend only on \(d,\rho,m,k,C_{\mathrm{loc}},\Omega,\gamma_{\mathrm{sh}},c_{\mathrm{qu}}\); in particular, they are independent of \(C_{\mathrm{dim}}\), \(H\), the rule, and the particular mesh family satisfying the stated regularity bounds.
\end{theorem}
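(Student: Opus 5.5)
The plan is to follow the four-step outline from the introduction. The argument is by contradiction against a single common local space. We fix a finite family of coefficients in advance and show that one rule cannot approximate all of their solution gradients.

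\textbf{Step 1: a common local restriction space.} Fix \(T\in\mathcal T_H\). Only basis functions whose anchors lie in \(\omega_m(T)\) can be nonzero on \(T\). By shape regularity and quasi-uniformity, \(\#\omega_m(T)\le C(d,m,\gamma_{\mathrm{sh}},c_{\mathrm{qu}})\). Hence
\[
V_H^\kappa|_T\subseteq \sum_{T'\subseteq\omega_m(T)} W_{H,T'}^\kappa|_T ,
\]
and this space has dimension at most \(n_*\coloneqq \floor{C_{\mathrm{loc}}}\,C\), independent of \(H\) and \(\kappa\). Each \(T'\subseteq\omega_m(T)\) satisfies \(\omega_{m+k}(T')\subseteq\omega_{2m+k}(T)\). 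By \cref{eq:visibility-consistency}, if two coefficients agree on \(\omega_{2m+k}(T)\), the sum above is the same space \(S_T\) for both. This gives a single space \(\nabla S_T\) of dimension at most \(n_*\) that must serve every coefficient in the family on \(T\).

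\textbf{Step 2: a finite corrector family.} Let \(Y=(0,1)^d\) and fix an open core \(Q\Subset Y\). Choose smooth periodic coefficients \(\kappa^{(0)},\dots,\kappa^{(J)}\) with values in \([1,\rho]\) that coincide on \(Q\) and differ only by small perturbations supported in \(Y\setminus\overline Q\). I want the gradient fields of their cell correctors, \(\{e_l+\nabla\chi^{(j)}_l\}\) restricted to \(Q\), to span a space of dimension greater than \(n_*\). One can then derive a quantitative bound of the form
\[
\max_{j,l}\ \Dist_{L^2(Q')}\bigl(e_l+\nabla\chi_l^{(j)},\,Z\bigr)\ge \delta>0
\]
for every \(Z\) with \(\dim Z\le n_*\) and every subcube \(Q'\) of \(Q\) of a fixed relative size. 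Generating many independent fields uses unique continuation and a Runge-type argument: the derivative of the corrector with respect to a perturbation in \(Y\setminus\overline Q\) is \(\kappa\)-harmonic in \(Q\). Varying the perturbation then produces infinitely many independent traces in \(Q\). The quantitative \(\delta\) then follows from finite-dimensional compactness, because \(J\) is fixed.

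\textbf{Step 3: scaling to the mesh.} Set \(\varepsilon_H=LH\), with \(L\) chosen large relative to the patch diameter \(\Diam\omega_{2m+k}(T)\lesssim(2m+k+1)H\). Then a positive fraction (independent of \(H\)) of the elements \(T\) inside a fixed interior cube \(D\) have \(\omega_{2m+k}(T)\subset\varepsilon_H(z+Q)\) for some \(z\in\mathbb Z^d\). Since the coefficients \(\kappa^{(j)}(\cdot/\varepsilon_H)\) agree on these cores, Step 1 applies to them.

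\textbf{Step 4: exact solutions and summation.} Take \(u_0\in C_c^\infty(\Omega)\) with \(u_0(x)=x_l\) on \(D\), and set \(f_j=-\nabla\cdot(\bar\kappa^{(j)}\nabla u_0)\), using the homogenized tensor. This right-hand side is smooth and bounded in \(L^2\). By strong corrector convergence,
\[
\nabla u_{\kappa_H^{(j)},f_j}-\bigl(e_l+\nabla\chi_l^{(j)}\bigr)(\cdot/\varepsilon_H)\to0 \quad\text{in } L^2(D).
\]
This convergence holds for the finitely many \(j,l\), so it is uniform over the family. On each good \(T\), the Galerkin error is at least the best-approximation error from \(\nabla S_T\). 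By the rescaled form of Step 2, this is at least \(\delta^2|T|\) up to a constant, for at least one index pair \((j,l)\). A pigeonhole argument over the finitely many pairs then gives one pair whose total error over the good elements is \(\gtrsim\) a constant. Quasi-optimality of Galerkin in \(\norm{\cdot}_{a_\kappa}\) then yields \(\mathcal E_H\ge c_*\) once \(H<H_*\), where \(H_*\) is where the homogenization remainder drops below \(\delta/2\). Finally, \cref{eq:minimax-lower-bound} follows immediately from \cref{eq:main-lower-bound}.

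\textbf{Main obstacle.} I expect Step 2 to be the hardest: getting more than \(n_*\) independent corrector fields on a common core from perturbations supported outside it, with a quantitative lower bound. A secondary difficulty is that the pigeonhole step in Step 4 must pick a single coefficient per \(H\), so the bound has to survive the many element-by-element choices that get aggregated into one global pair.
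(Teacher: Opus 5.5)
Your outline reproduces the paper's architecture: a common local space on $\omega_{2m+k}(T)$, a finite family of periodic coefficients agreeing on a core, positive-density patch containment with $\varepsilon_H=LH$, a homogenized solution affine on an interior cube, and a pigeonhole over the finite family. Your variations (all $d$ directions, a maximum instead of a sum of squares, inscribed subcubes instead of the rescaled simplices) are harmless.

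\textbf{Step~4 does not follow as written.} You apply the rescaled Step~2 bound element by element to the \emph{exact} gradients, claiming a local error $\gtrsim\delta^2|T|$. You then take $H_*$ where ``the homogenization remainder drops below $\delta/2$''. Strong corrector convergence only gives $\norm{\nabla u-g_{j,l}(\cdot/\varepsilon_H)}_{L^2(D)}\to0$. With $\sim H^{-d}$ good elements, this gives no control of $|T|^{-1/2}\norm{\nabla u-g_{j,l}(\cdot/\varepsilon_H)}_{L^2(T)}$ on an individual element, so the per-element claim is unjustified. The paper reverses the order. It first sums the bound for the limit fields $g(\cdot/\varepsilon_H)$ over all good elements and all indices, which gives an order-one total $c_1$. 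Only then does it pass to exact solutions, via $\Dist(\nabla u,Z_T)\ge\bigl(\Dist(g(\cdot/\varepsilon_H),Z_T)-\norm{\nabla u-g(\cdot/\varepsilon_H)}_{L^2(T)}\bigr)_+$ and $(A-B)_+^2\ge\frac12A^2-B^2$, so the global squared remainder is subtracted only once. $H_*$ is where that global remainder falls below $c_1/4$, and the index is chosen afterwards. Alternatively, a Chebyshev count shows that only $o(H^{-d})$ good elements carry a large normalized local remainder, and you can discard them.

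\textbf{Step~2, which you rightly call the crux, is asserted rather than proved.} Unique continuation only propagates a linear relation from an open subset of the core to the connected region where all fields are harmonic. It does not show that different exterior sources yield independent fields. A Runge-type density statement for the restricted sources $\nabla\cdot(b\,e_l)$ would itself need a tailored proof.

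The paper's mechanism is explicit. With base coefficient $\equiv1$, the first variation is the periodic dipole potential $v_j=(\partial_{y_1}G)*b_j$. For bumps concentrated at distinct points $z_j$ outside the core, the limiting fields $\nabla\partial_{y_1}G(\cdot-z_j)$ carry non-removable $|y-z_j|^{-d}$ singularities at different points. A relation with $e_1$ that vanishes on an open subset of the core therefore extends by analyticity to the connected set $\Torus\setminus\{z_1,\dots,z_q\}$ (this is where $d\ge2$ enters), and must be trivial. Mollification then keeps a positive Gram determinant.

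You also need the quantitative passage from first variations to actual coefficients at one fixed amplitude $t$. This requires three ingredients:
\begin{itemize}
\item the expansion $\nabla\chi_t=t\nabla v+\bigO(t^2)$ in $L^2(Y)$ and, since the perturbation vanishes on the core, in $C^0$ on inner cores;
\item compactness over all admissible rescaled simplices, to get a uniform Gram lower bound;
\item the invertible change of basis between $(g_0,\dots,g_q)$ and $(g_0,(g_1-g_0)/t,\dots,(g_q-g_0)/t)$, combined with the Eckart--Young bound $\sum_j\Dist(g_j,Z)^2\ge\sigma_{q+1}^2$ for every $Z$ with $\dim Z\le q$.
\end{itemize}
With these two repairs your argument becomes the paper's proof.
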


The proof establishes the stronger finite-family result in \cref{prop:finite-family-realization}: a single finite collection of smooth periodic coefficient profiles and smooth compactly supported right-hand sides, chosen before \(H\) and before the rule, contains a member that yields the lower bound for every sufficiently small \(H\) and every admissible rule.

\begin{remark}
The order-one scale in \cref{thm:main} is sharp for the normalized error in \cref{eq:operator-error}.
Indeed, if \(C_\mathrm{P}\) is the Poincar\'e constant of \(\Omega\), then testing \cref{eq:weak-problem} with \(u_{\kappa,f}\), using \(\kappa\ge1\), and applying Poincar\'e's inequality gives \(\norm{u_{\kappa,f}}_{a_\kappa}\le C_\mathrm{P}\norm{f}_{L^2(\Omega)}\).
Galerkin best approximation and the admissible competitor \(0\in V_H\) therefore imply \(\mathcal E_H(\kappa;V_H)\le C_\mathrm{P}\), for every coefficient and every conforming trial space.
Thus \cref{thm:main} gives a \(\Theta(1)\) worst-case lower bound in this normalization.
The upper scale comes from comparison with the zero space; it is not a constructive approximation method.
\end{remark}

\section{From fixed visibility to a common local space}
\label{sec:local-rank}

The first step converts support locality and bounded anchor multiplicity into a local dimension bound.
For a fixed element \(K\), only basis functions anchored within \(m\) graph layers can affect \(K\).
The parameter \(q\) below bounds the dimension of their restrictions to \(K\), and \(R\) is the radius on which coefficient agreement is sufficient to make those restrictions identical.

Let
\[
    r\coloneqq\floor{C_{\mathrm{loc}}}, \qquad \mathcal N_m(K) \coloneqq \bigl\{ T\in\mathcal T_H \bigm| \Dist_{\mathcal T_H}(T,K)\le m \bigr\},
\]
where \(\Dist_{\mathcal T_H}\) is the face-adjacency graph distance.
Since every \(d\)-simplex has at most \(d+1\) face neighbors, we have the mesh-independent bound
\[
    b_{d,m} \coloneqq \sum_{j=0}^m(d+1)^j \ge \sup_{H\in\mathcal H} \sup_{K\in\mathcal T_H} \#\mathcal N_m(K).
\]
We therefore set
\begin{equation}
    q\coloneqq b_{d,m}r, \qquad R\coloneqq 2m+k.
    \label{eq:q-and-R}
\end{equation}
The notation \(\omega_R(K)\) denotes the \(R\)-layer element patch defined above.

The lemma records this reduction in a form that can be used later on elements whose coefficient-information patches lie in regions where the candidate coefficients coincide.

\begin{lemma}[Common local restriction space]
\label{lem:restriction}
Fix a rule \(\mathscr M\), a coefficient \(\kappa\in\mathcal K_\rho\), and an element \(K\in\mathcal T_H\).
Using the fixed-rule convention above, write \(V_H^\kappa\) for \(V_H^\mathscr M(\kappa)\) and define
\[
    Y_K^\kappa \coloneqq \bigl\{ [v\vert_K] \bigm| v\in V_H^\kappa \bigr\} \subset H^1(K)/\mathbb R.
\]
Then
\begin{equation}
    \dim Y_K^\kappa\le q.
    \label{eq:local-rank-bound}
\end{equation}
If \(\kappa,\widetilde\kappa\in\mathcal K_\rho\) satisfy \(\kappa=\widetilde\kappa\) a.e. on \(\omega_R(K)\), then
\begin{equation}
    Y_K^\kappa = Y_K^{\widetilde\kappa}.
    \label{eq:common-restriction-space}
\end{equation}
\end{lemma}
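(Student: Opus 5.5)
The plan is to decompose \(V_H^\kappa\) into anchored subspaces and observe that only anchors near \(K\) contribute to restrictions on \(K\).

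\emph{Step 1: reduction to nearby anchors.} Since \(\Psi_H^\kappa\) is a basis, every \(v\in V_H^\kappa\) can be written as a sum of elements of the anchored subspaces:
\[
    v=\sum_{T\in\mathcal T_H} w_T, \qquad w_T\in W_{H,T}^\kappa.
\]
Take a basis function \(\psi_{H,i}^\kappa\) with anchor \(T\). By \cref{def:support-local}, its support lies in \(\omega_m(T)\).

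\emph{Step 2: vanishing of distant contributions.} Suppose \(\Dist_{\mathcal T_H}(T,K)>m\). Then \(K\) is not among the elements forming \(\omega_m(T)\). By conformity, \(K\cap\omega_m(T)\) is contained in a finite union of lower-dimensional faces, so it has measure zero. Hence \(\psi_{H,i}^\kappa\vert_K=0\) a.e. This gives
\[
    Y_K^\kappa=\Bigl\{\Bigl[\sum_{T\in\mathcal N_m(K)} w_T\vert_K\Bigr] \Bigm| w_T\in W_{H,T}^\kappa\Bigr\}.
\]

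\emph{Step 3: the dimension bound.} Each \(W_{H,T}^\kappa\) is spanned by at most \(\#\{i\mid T_{H,i}^\kappa=T\}\le r\) functions, since that count is an integer bounded by \(C_{\mathrm{loc}}\). Also \(\#\mathcal N_m(K)\le b_{d,m}\). The map
\[
    \bigoplus_{T\in\mathcal N_m(K)}W_{H,T}^\kappa\to H^1(K)/\mathbb R
\]
is linear and surjects onto \(Y_K^\kappa\). Hence \(\dim Y_K^\kappa\le b_{d,m}r=q\), which is \cref{eq:local-rank-bound}.

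\emph{Step 4: the common restriction space.} Suppose \(\kappa=\widetilde\kappa\) a.e. on \(\omega_R(K)\). For each \(T\in\mathcal N_m(K)\), I would show \(\omega_{m+k}(T)\subseteq\omega_R(K)\). Indeed, any element at graph distance at most \(m+k\) from \(T\) is at distance at most \(m+(m+k)=2m+k=R\) from \(K\), by the triangle inequality for graph distance. So the coefficients agree a.e. on \(\omega_{m+k}(T)\). Visibility consistency \cref{eq:visibility-consistency} then gives \(W_{H,T}^\kappa=W_{H,T}^{\widetilde\kappa}\). Both quotient spaces are the image of the same direct sum under the same restriction-and-quotient map, so they coincide, which is \cref{eq:common-restriction-space}. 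Distant anchors in \(\widetilde\kappa\) also contribute nothing, by Step 2 applied to \(\widetilde\kappa\).

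The only delicate point is the measure-zero argument in Step 2. It needs the precise meaning of "essential closed support in \(\overline\Omega\)" and of \(\omega_m(T)\) as a closed union of elements. I would handle it by noting that distinct conforming simplices intersect in a common lower-dimensional face, so \(K\cap\omega_m(T)\) has zero \(d\)-dimensional measure.
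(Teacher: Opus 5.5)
Your proposal is correct and follows essentially the same route as the paper: only anchors in \(\mathcal N_m(K)\) contribute on \(K\), each contributes at most \(r\) dimensions, and the triangle inequality \(\omega_{m+k}(T)\subseteq\omega_{2m+k}(K)=\omega_R(K)\) combined with visibility consistency makes the contributing anchored subspaces, and hence the sums of their restrictions, identical. Your conformity argument that \(K\cap\omega_m(T)\) has measure zero for distant anchors only spells out a step the paper leaves implicit.
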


\begin{proof}
Suppose that a basis function anchored at \(T\) is nonzero on \(K\) on a set of positive measure.
Its support lies in \(\omega_m(T)\), so \(T\in\mathcal N_m(K)\).
At most \(r\) basis functions are anchored at each such \(T\).
Restriction to \(K\) and passage to the quotient cannot increase dimension, which proves \cref{eq:local-rank-bound}.

For \(T\in\mathcal N_m(K)\), the triangle inequality for graph distance gives
\[
    \omega_{m+k}(T) \subseteq \omega_{2m+k}(K) = \omega_R(K).
\]
Hence \cref{eq:visibility-consistency} makes every anchored subspace that can contribute nontrivially on \(K\) identical for \(\kappa\) and \(\widetilde\kappa\).
The sums of their restriction spaces are therefore equal, proving \cref{eq:common-restriction-space}.
\end{proof}

The corrector-family construction in the next section is needed only when \(q\ge1\).
If \(q=0\), then \(r=0\), no basis function can be assigned to any anchor, and every selected trial space equals \(\{0\}\).
Choose any nonzero \(f_0\in C_{\mathrm c}^\infty(\Omega)\), set \(a_0\equiv1\), and fix any \(L>1\).
For every \(H\), the corresponding coefficient \(\kappa_{H,0}=a_0(\cdot/(LH))\) is identically one, whereas the exact solution \(u_{1,f_0}\) is nonzero.
Since the Galerkin solution in the zero space vanishes, one may take
\[
    c_* = \frac{\norm{\nabla u_{1,f_0}}_{L^2(\Omega)}}{\norm{f_0}_{L^2(\Omega)}}>0,
    \qquad H_*=1.
\]
This proves \cref{prop:finite-family-realization} for \(q=0\); hence its proof below may assume \(q\ge1\).

\section{A finite corrector family with common local coefficient data}
\label{sec:finite-family}

The preceding section reduces fixed visibility to a common \(q\)-dimensional local approximation space on any element whose coefficient-information patch lies in a region where the coefficients coincide.
We now establish the approximation result used in the lower bound.
We construct a finite family of smooth periodic coefficients that are all equal to one on a core \(D\) and therefore have the same local coefficient data there.
Their corrector fields in the first coordinate direction span \(q+1\) independent directions inside the same core.
The construction uses exterior dipoles to generate independent harmonic fields in \(D\) and a scalar-coefficient perturbation argument to realize those fields as corrector gradients.
The next section supplies many mesh elements whose coefficient-information patches lie in copies of \(D\), and \cref{sec:homogenization} then realizes the cell correctors through exact PDE solutions with smooth right-hand sides.

Let \(Y\coloneqq(-1/2,1/2)^d\).
Identifying opposite faces, we regard \(Y\) as the flat torus \(\Torus\).
Choose concentric \emph{open cubes}
\begin{equation}
    D_-\Subset D_0\Subset D\Subset Y.
    \label{eq:nested-cubes}
\end{equation}
\cref{fig:nested-cell-geometry} summarizes the two roles of these sets.
The perturbations that distinguish the coefficients are placed outside \(D\), whereas the mesh elements used in the lower bound and their coefficient-information patches remain inside \(D\).
The figure is only a two-dimensional schematic; the proof below uses the stated separation distances in arbitrary dimension.

\begin{figure}[htbp]
    \centering
    \includegraphics[width=0.88\textwidth]{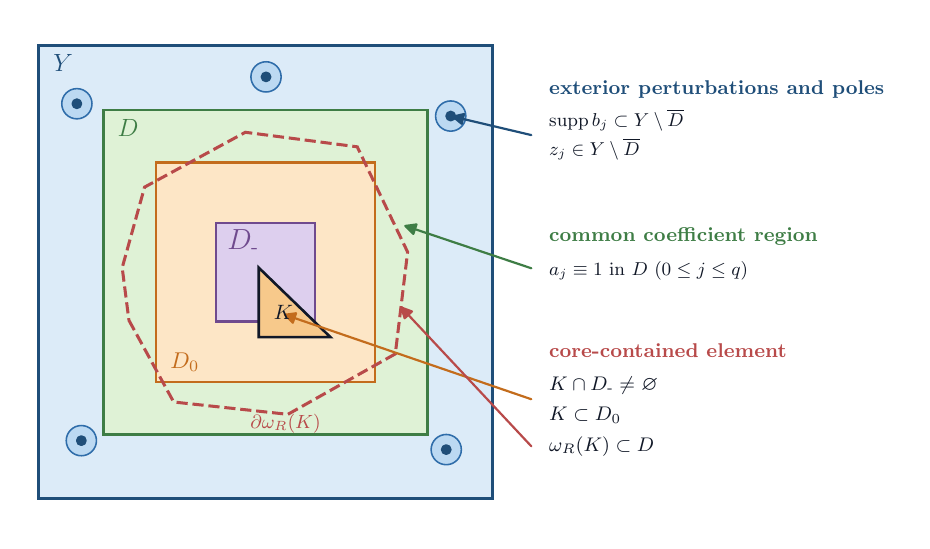}
    \caption{Reference-cell geometry behind the construction.
    The concentric squares represent the cubes \(D_-\Subset D_0\Subset D\Subset Y\).
    The blue points \(z_j\) and their surrounding bump supports lie in \(Y\setminus\overline D\).
    The highlighted simplex \(K\) intersects \(D_-\) and is contained in \(D_0\).
    The dashed polygonal boundary is an abstract representation of \(\partial\omega_R(K)\); it crosses \(\partial D_0\) while remaining strictly inside \(D\).
    The underlying simplicial mesh is deliberately omitted.
    All coefficient profiles satisfy \(a_j\equiv1\) in \(D\).}
    \label{fig:nested-cell-geometry}
\end{figure}

We use the standard periodic cell corrector from scalar periodic homogenization; see, for example, \cite[Theorems~2.3 and~2.6]{Allaire1992} and \cite[Section~4.5.4]{Evans2010}.
\begin{definition}[Periodic cell corrector]
\label{def:corrector}
For a smooth, positive, \(Y\)-periodic scalar coefficient \(a\) and an index \(\ell\in\{1,\dots,d\}\), the \(\ell\)-th periodic cell corrector is the class \(\chi_{a,\ell}\in H^1_{\Per}(Y)/\mathbb R\) determined by
\begin{equation}
    \int_Y a(y) \bigl(e_\ell+\nabla\chi_{a,\ell}(y)\bigr) \cdot\nabla\varphi(y)\di y = 0
    \qquad \bigl(\varphi\in H^1_{\Per}(Y)\bigr).
    \label{eq:cell-problem}
\end{equation}
\end{definition}
Existence and uniqueness modulo constants follow directly from Lax--Milgram on the mean-zero periodic Sobolev space.

\subsection{Exterior periodic dipoles}

Let \(G\) be the mean-zero periodic Green distribution:
\[
    -\Delta G=\delta_0-1 \quad\text{on }\Torus.
\]
Equivalently,
\[
    G(y) = \sum_{n\in\mathbb Z^d\setminus\{0\}} \frac{e^{2\pi\mathrm{i} n\cdot y}}{4\pi^2\lvert n\rvert^2},
\]
in the sense of distributions.
In particular, \(G\) is smooth away from the origin and satisfies \(\Delta G=1\) there.
Its first derivatives are harmonic away from the origin.

The next lemma provides the independent harmonic fields used in the finite-dimensional approximation argument.
The source terms are supported outside \(D\), so the resulting fields are harmonic inside \(D\); nevertheless their gradients remain linearly independent on every nonempty open subset of \(D\).
This independence will later force a positive distance from any \(q\)-dimensional local trial space.

\begin{lemma}[Independent fields generated outside the core]
\label{lem:dipoles}
For every \(q\ge1\), there exist nonnegative functions
\[
    b_1,\dots,b_q \in C_{\mathrm c}^\infty(Y\setminus\overline D),
\]
such that the mean-zero periodic solutions \(v_j\) of
\begin{equation}
    \int_Y\nabla v_j\cdot\nabla\varphi\di y = -\int_Y b_j e_1\cdot\nabla\varphi\di y \qquad \bigl(\varphi\in H^1_{\Per}(Y)\bigr),
    \label{eq:dipole-weak-problem}
\end{equation}
have the following property: the \(q+1\) fields
\begin{equation}
    e_1,\nabla v_1,\dots,\nabla v_q,
    \label{eq:independent-fields}
\end{equation}
are linearly independent in \(L^2(O;\mathbb R^d)\) for every nonempty open set \(O\subset D\).
\end{lemma}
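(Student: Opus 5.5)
The plan is to take the $b_j$ as radially symmetric bumps around distinct points. Radial symmetry lets the mean-value property identify each $\nabla v_j$ outside its bump with an explicit periodic dipole field. Independence then follows from real analyticity and from comparing the singularities at the dipole centres.

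\emph{Choice of bumps and representation of $v_j$.} Choose distinct points $z_1,\dots,z_q\in Y\setminus\overline D$ and a radius $\delta>0$ so small that the closed balls $\overline{B_\delta(z_j)}$ are pairwise disjoint and contained in $Y\setminus\overline D$. Fix a nonnegative, nonzero, radial bump $\eta\in C_{\mathrm c}^\infty(B_\delta(0))$ and set $b_j\coloneqq\eta(\cdot-z_j)$, with mass $\mu\coloneqq\int\eta>0$.

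Problem \cref{eq:dipole-weak-problem} is the weak form of $-\Delta v_j=\partial_1 b_j$ on $\Torus$. Its right-hand side has zero mean, so Lax--Milgram gives a unique mean-zero solution. Via the Fourier series of $G$ this solution is the periodic convolution $v_j=G*\partial_1 b_j=(\partial_1G)*b_j$; the identity can be checked coefficientwise.

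Now fix $y$ with $\lvert y-z_j\rvert>\delta$ (distance on the torus). The function $x\mapsto\partial_1G(y-x)$ is harmonic on $B_\delta(z_j)$, because $\partial_1G$ is harmonic away from the origin. Since $b_j$ is radial about $z_j$, the mean-value property over spheres gives
\[
    v_j(y)=\mu\,\partial_1G(y-z_j) \qquad \bigl(\lvert y-z_j\rvert>\delta\bigr).
\]
Hence on $U\coloneqq\Torus\setminus\bigcup_j\overline{B_\delta(z_j)}$, which contains $D$, we have $\nabla v_j=\mu\,\nabla\partial_1G(\cdot-z_j)$. This expression is real analytic on all of $\Torus\setminus\{z_1,\dots,z_q\}$.

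\emph{Independence.} Suppose that $c_0e_1+\sum_jc_j\nabla v_j=0$ in $L^2(O;\mathbb R^d)$ for some nonempty open $O\subset D$. The field
\[
    F\coloneqq c_0e_1+\mu\sum_jc_j\nabla\partial_1G(\cdot-z_j)
\]
is real analytic on the connected set $\Torus\setminus\{z_1,\dots,z_q\}$; connectedness holds because $d\ge2$. Since $F$ vanishes on $O$, it vanishes identically there.

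Near $z_j$, write $G=\Phi+g$ with $\Phi$ the Newtonian fundamental solution and $g$ smooth near $0$. This decomposition is available because $-\Delta(G-\Phi)=-1$ near the origin. Then $\mu c_j\nabla\partial_1\Phi(\cdot-z_j)$ equals minus a field that is bounded near $z_j$. But $\partial_1^2\Phi$ is homogeneous of degree $-d$ and not identically zero, so it is unbounded near the origin. Hence $c_j=0$ for every $j$, and then $c_0e_1=0$ forces $c_0=0$.

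The main obstacle is the identification of $v_j$ outside the bump. One must apply the mean-value property to $\partial_1G$ rather than to $G$, since $\Delta G=1\ne0$, and must make sure each small ball avoids the periodic images of the singularity. After that step, the analytic-continuation and singularity argument is routine.
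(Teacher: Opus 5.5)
Your proof is correct, but it links the smooth sources to the point dipoles differently from the paper. The paper first proves independence of $e_1,\nabla p_1,\dots,\nabla p_q$ with $p_j=\partial_{y_1}G(\cdot-z_j)$, using the same analytic-continuation and non-removable-singularity argument you give. It then mollifies with an arbitrary nonnegative unit-mass bump $\eta_\delta$, not necessarily radial. Next it uses $\nabla v_{j,\delta}\to\nabla p_j$ in $C^\ell(\overline D)$ to keep the Gram determinant on $D$ positive for small $\delta$. Finally it propagates any linear relation on $O$ back to all of $D$ by harmonic analyticity.

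Your radial choice of $\eta$, combined with the mean-value property for the harmonic function $x\mapsto\partial_1G(y-x)$, turns this approximation into an exact identity: $\nabla v_j=\mu\nabla p_j$ outside $\overline{B_\delta(z_j)}$. The independence of the singular fields therefore transfers directly. You need no limiting argument, no continuity of the Gram determinant in $\delta$, and no smallness of $\delta$ beyond fitting the balls in $Y\setminus\overline D$. Your check that the torus distance from $D$ to $z_j$ exceeds $\delta$ is exactly what the mean-value step requires.

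In return, the paper's perturbation route works for any sufficiently concentrated nonnegative bump profile, not only radial ones. Your route gives a shorter and fully explicit proof.
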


\begin{proof}
We first construct independent fields with point singularities and then replace the point sources by smooth nonnegative bumps.

Choose distinct points \(z_1,\dots,z_q\in Y\setminus\overline D\) and define
\[
    p_j(y) \coloneqq -\partial_{(z_j)_1}G(y-z_j) = \partial_{y_1}G(y-z_j).
\]
Suppose that
\begin{equation}
    c_0e_1+\sum_{j=1}^q c_j\nabla p_j=0,
    \label{eq:singular-field-relation}
\end{equation}
on a nonempty open subset of \(D\).
Set \(M\coloneqq\Torus\setminus\{z_1,\dots,z_q\}\).
Every component of the left-hand side is harmonic, hence real analytic in local coordinates, on \(M\); see, for instance, \cite[Theorem~1.28]{Axler2001}.
The manifold \(M\) is connected, and this is where \(d\ge2\) is used.
The identity theorem for real-analytic functions \cite[Theorem~1.27]{Axler2001}, applied in overlapping coordinate balls along paths in \(M\), therefore propagates the vanishing in \cref{eq:singular-field-relation} from the assumed nonempty open subset of \(D\) to all of \(M\).
Near \(z_j\), all terms other than \(c_j\nabla p_j\) are smooth, whereas
\[
    -\Delta p_j=\partial_{y_1}\delta_{z_j}.
\]
More explicitly, in a coordinate ball about \(z_j\), the periodic Green function is the Euclidean fundamental solution of \(-\Delta\) plus a smooth function.
Hence \(\nabla p_j\) contains a nonzero second derivative of that fundamental solution, of order \(\lvert y-z_j\rvert^{-d}\), and has no smooth extension across \(z_j\).
\cref{eq:singular-field-relation}, whose other terms are smooth near \(z_j\), can therefore hold only if \(c_j=0\).
This argument applies to every \(j\), after which \(c_0=0\).

\proofheading{Mollification.}
Let \(\eta_\delta\) be a nonnegative, unit-mass periodic mollifier supported in the torus ball of radius \(\delta\) about the origin, and set \(b_{j,\delta}(y)\coloneqq\eta_\delta(y-z_j)\).
Choose \(\delta>0\) smaller than the torus distance from every \(z_j\) to \(\overline D\) and smaller than the radius of a periodic coordinate neighborhood about every \(z_j\).
Then each support is contained in \(Y\setminus\overline D\) and is represented by one ordinary coordinate ball, so no ambiguity arises from the chosen fundamental cell.
The solution of \cref{eq:dipole-weak-problem} with \(b_j=b_{j,\delta}\) is, up to an irrelevant additive constant,
\[
    v_{j,\delta} = (\partial_{y_1}G)*b_{j,\delta}.
\]
Since \(z_j\) remains a positive distance from \(\overline D\), for every fixed \(\ell\in\mathbb N_0\),
\[
    \nabla v_{j,\delta} \longrightarrow \nabla p_j
    \quad\text{in }C^\ell(\overline D;\mathbb R^d)
    \quad\text{as }\delta\to0.
\]
The Gram determinant of the finite family \((e_1,\nabla v_{1,\delta},\dots,\nabla v_{q,\delta})\) in \(L^2(D;\mathbb R^d)\) is therefore positive for all sufficiently small \(\delta\).
Fix such a \(\delta\) and write \(b_j\coloneqq b_{j,\delta}\).

The functions \(v_j\) are harmonic on \(D\).
Any linear relation among the fields in \cref{eq:independent-fields} on a nonempty open \(O\subset D\) propagates to \(D\) by harmonic analyticity and is trivial by the preceding Gram-determinant argument.
\end{proof}

\subsection{Perturbation to scalar coefficients}

The exterior bump functions in \cref{lem:dipoles} do not yet define coefficients for the cell problem.
The next lemma connects them to the periodic cell problem.
For a small scalar perturbation \(a_t=1+tb\), the first cell-corrector gradient has first variation \(t\nabla v\), with a quadratic remainder.
Because the perturbation vanishes in the core \(D\), the same \(\bigO(t^2)\) control is available uniformly inside smaller cores, which is what allows the independent dipole fields to become actual corrector fields.

\begin{lemma}[First variation of the corrector]
\label{lem:perturbation}
Let \(b\in C^\infty_{\Per}(Y)\) be nonnegative, set \(a_t\coloneqq1+tb\), and let \(\chi_t\) be the mean-zero representative of \(\chi_{a_t,1}\).
If \(v\) is the mean-zero representative of the solution of \cref{eq:dipole-weak-problem} with \(b_j\) replaced by \(b\), then there is a constant \(C_b>0\), depending only on \(\norm{b}_{L^\infty(Y)}\) and \(\norm{b}_{L^2(Y)}\), such that, for every \(t>0\),
\begin{equation}
    \norm{\nabla\chi_t-t\nabla v}_{L^2(Y)} \le C_b t^2.
    \label{eq:corrector-L2-expansion}
\end{equation}
If \(b=0\) on \(D\), then, for every \(D'\Subset D\), there is a constant \(C_{b,D'}>0\) such that
\begin{equation}
    \norm{\nabla\chi_t-t\nabla v}_{C^0(\overline{D'})} \le C_{b,D'}t^2.
    \label{eq:corrector-C0-expansion}
\end{equation}
Here \(C_{b,D'}\) depends only on \(d,D,D'\), \(\norm{b}_{L^\infty(Y)}\), and \(\norm{b}_{L^2(Y)}\), and is independent of \(t\).
\end{lemma}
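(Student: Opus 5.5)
The plan is to write \(\chi_t = t v + w_t\) and estimate the remainder \(w_t\) in two stages: first globally in \(L^2\), then in \(C^0\) on \(D'\) by interior elliptic regularity, using that the coefficient is constant on \(D\).

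\textbf{Remainder equation.} Insert \(a_t=1+tb\) into \cref{eq:cell-problem} with \(\ell=1\) and expand. For all \(\varphi\in H^1_{\Per}(Y)\),
\[
    \int_Y (1+tb)\nabla\chi_t\cdot\nabla\varphi\di y = -t\int_Y b\,e_1\cdot\nabla\varphi\di y .
\]
Subtracting \(t\) times \cref{eq:dipole-weak-problem} (with \(b\) in place of \(b_j\)) gives, for \(w_t\coloneqq\chi_t-tv\),
\[
    \int_Y (1+tb)\nabla w_t\cdot\nabla\varphi\di y = -t^2\int_Y b\,\nabla v\cdot\nabla\varphi\di y .
\]

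\textbf{Global bound.} Test the remainder equation with \(\varphi=w_t\). Since \(1+tb\ge1\) for \(t>0\) and \(b\ge0\), Cauchy--Schwarz yields
\[
    \norm{\nabla w_t}_{L^2(Y)}\le t^2\norm{b}_{L^\infty(Y)}\norm{\nabla v}_{L^2(Y)}.
\]
Testing \cref{eq:dipole-weak-problem} with \(v\) gives \(\norm{\nabla v}_{L^2(Y)}\le\norm{b}_{L^2(Y)}\). Combining the two proves \cref{eq:corrector-L2-expansion} with \(C_b=\norm{b}_{L^\infty(Y)}\norm{b}_{L^2(Y)}\), uniformly in \(t>0\).

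\textbf{Interior bound.} Now assume \(b=0\) on \(D\). Taking \(\varphi\) supported in \(D\) in the remainder equation, both the coefficient perturbation and the right-hand side vanish there. Hence \(w_t\) is weakly harmonic in \(D\), so smooth by Weyl's lemma, and likewise each component of \(\nabla w_t\) is harmonic in \(D\).

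Set \(r=\Dist(D',\partial D)>0\). The mean-value property on balls of radius \(r\) centred at points of \(\overline{D'}\) gives, for each component and each such point \(x\),
\[
    \lvert\nabla w_t(x)\rvert\le \lvert B_r\rvert^{-1/2}\norm{\nabla w_t}_{L^2(D)} .
\]
Together with the global bound this gives \cref{eq:corrector-C0-expansion} with \(C_{b,D'}=c_d\, r^{-d/2}C_b\), which depends only on \(d,D,D'\) and the two norms of \(b\).

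The only delicate point is the \(C^0\) step, specifically the need for a bound independent of \(t\). This works precisely because the perturbation vanishes on \(D\): there the equation for \(w_t\) is exactly Laplace's equation, with no \(t\)-dependent coefficient. The rest is a routine energy estimate.
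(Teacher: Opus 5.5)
Your proof is correct and follows the paper's structure: the splitting $\chi_t=tv+w_t$, an energy estimate giving the same constant $\lVert b\rVert_{L^\infty(Y)}\lVert b\rVert_{L^2(Y)}$, and interior harmonic estimates for $w_t$ on $D$. As the paper does, add $1$ to this constant so it stays strictly positive when $b\equiv0$, since the lemma asks for $C_b>0$.

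The differences are minor. In the energy step you keep the weight $1+tb$ on the left and bound $\lVert\nabla v\rVert_{L^2(Y)}$, whereas the paper moves $tb\nabla\chi_t$ to the right and bounds $\lVert\nabla\chi_t\rVert_{L^2(Y)}$. In the interior step you apply the mean-value property directly to the harmonic components of $\nabla w_t$. This bypasses the paper's use of the local derivative estimate for $w_t$ followed by the periodic Poincar\'e inequality, so your argument does not need the mean-zero normalization at all.
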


\begin{proof}
From \cref{def:corrector}, we have
\begin{equation}
    \int_Y\nabla\chi_t\cdot\nabla\varphi\di y = -t\int_Y b(e_1+\nabla\chi_t)\cdot\nabla\varphi\di y.
    \label{eq:perturbed-cell-equation}
\end{equation}
Testing the original weighted cell problem with \(\chi_t\), using \(\int_Y\partial_1\chi_t\di y=0\), and applying Cauchy--Schwarz yield
\[
    \norm{\nabla\chi_t}_{L^2(Y)}\le t\norm{b}_{L^2(Y)}.
\]
Subtracting \(t\) times the equation for \(v\) from \cref{eq:perturbed-cell-equation} gives
\[
    \int_Y\nabla(\chi_t-tv)\cdot\nabla\varphi\di y = -t\int_Y b\,\nabla\chi_t\cdot\nabla\varphi\di y.
\]
Testing with \(\chi_t-tv\) proves \cref{eq:corrector-L2-expansion}.
For example, one may take
\[
    C_b=1+\norm{b}_{L^\infty(Y)}\norm{b}_{L^2(Y)}.
\]
If \(b=0\) on \(D\), then \(w_t\coloneqq\chi_t-tv\) is harmonic there.
The chosen representatives make \(w_t\) mean zero on \(Y\).
Choose \(D'\Subset D''\Subset D\).
The local derivative estimate for harmonic functions \cite[Theorem~7, p.~29]{Evans2010}, applied on balls with radii controlled by \(\Dist(D',\mathbb R^d\setminus D'')\), and the periodic Poincar\'e inequality give, with \(C_{d,D',D''}\) depending only on the displayed sets and the dimension,
\[
    \norm{\nabla w_t}_{L^\infty(D')} \le C_{d,D',D''}\norm{w_t}_{L^2(D'')} \le C_{d,D',D''}\norm{\nabla w_t}_{L^2(Y)}.
\]
Combining this with \cref{eq:corrector-L2-expansion} proves \cref{eq:corrector-C0-expansion}.
\end{proof}

We now combine the exterior fields with the perturbation lemma.
The proposition produces finitely many periodic scalar coefficients that coincide on \(D\), while their corrector fields in the first coordinate direction remain uniformly separated from every \(q\)-dimensional subspace on every uniformly shape-regular simplex in \(D_0\).
The uniformity over this family of simplices is needed because the coarse mesh is arbitrary.

\begin{proposition}[Finite corrector family with common local coefficient data]
\label{prop:finite-corrector-family}
Fix \(q\ge1\), \(0<h_-\le h_+\), and \(\gamma\ge1\).
Let \(\mathscr S\) be any family of closed \(d\)-simplices \(S\subset\overline D_0\) satisfying
\begin{equation}
    h_-\le\Diam(S)\le h_+, \qquad \frac{\Diam(S)}{r_S}\le\gamma,
    \label{eq:family-simplex-regularity}
\end{equation}
where \(r_S\) is the inradius of \(S\).
There exist smooth periodic scalar coefficients
\begin{equation}
    a_0,\dots,a_q\in C^\infty_{\Per}(Y), \qquad 1\le a_j\le\rho, \qquad a_j=1\ \text{in }D \qquad (0\le j\le q),
    \label{eq:family-coefficients}
\end{equation}
and a constant \(c_{\mathrm{app}}>0\) such that
\begin{equation}
    \sum_{j=0}^q \Dist_{L^2(S;\mathbb R^d)}(g_j,Z)^2 \ge c_{\mathrm{app}},
    \label{eq:family-approximation-lower-bound}
\end{equation}
for every \(S\in\mathscr S\) and every subspace \(Z\subset L^2(S;\mathbb R^d)\) with \(\dim Z\le q\), where
\[
    g_j\coloneqq e_1+\nabla\chi_{a_j,1} \qquad (0\le j\le q).
\]
The profiles \(a_0,\dots,a_q\) and the constant \(c_{\mathrm{app}}\) may depend only on \(d,q,\rho,D,D_0,h_-,h_+\), and \(\gamma\); in particular, they are independent of the family \(\mathscr S\), the individual simplex \(S\), and the subspace \(Z\).
\end{proposition}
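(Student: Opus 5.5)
We take \(a_0\equiv1\) and \(a_j\coloneqq1+tb_j\) for \(1\le j\le q\). Here \(b_1,\dots,b_q\) are the bumps from \cref{lem:dipoles}, and \(t>0\) is a small parameter fixed at the end. Then \(a_j=1\) in \(D\), and \(1\le a_j\le\rho\) once \(t\max_j\norm{b_j}_{L^\infty}\le\rho-1\). Since \(\chi_{a_0,1}\) is constant, \(g_0=e_1\). By \cref{lem:perturbation}, \(g_j=e_1+t\nabla v_j+r_j\) with \(\norm{r_j}_{C^0(\overline{D_0})}\le C t^2\), using \(D'=D_0\Subset D\). The sum in \cref{eq:family-approximation-lower-bound} is the squared distance of the \((q+1)\)-tuple \((g_0,\dots,g_q)\) to the \(q\)-dimensional subspace generated by \(Z\) componentwise. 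We reduce it to the smallest singular value of a Gram matrix: for any \(q+1\) vectors \(g_j\) in a Hilbert space and any \(Z\) with \(\dim Z\le q\),
\[
\sum_j\Dist(g_j,Z)^2\ge\lambda_{\min}\bigl(\mathrm{Gram}(g_0,\dots,g_q)\bigr).
\]
To see this, pick a unit vector \(c\in\mathbb R^{q+1}\) with \(\sum_jc_jP_Zg_j=0\); one exists because \(q+1\) vectors in a \(q\)-dimensional space are dependent. Then
\[
\Bigl\lVert\sum_jc_jg_j\Bigr\rVert^2=\Bigl\lVert\sum_jc_j(g_j-P_Zg_j)\Bigr\rVert^2\le\sum_j\norm{g_j-P_Zg_j}^2
\]
by Cauchy--Schwarz, and the left side is at least \(\lambda_{\min}\).

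It therefore suffices to bound \(\lambda_{\min}\) of the Gram matrix of \((g_j)\) in \(L^2(S;\mathbb R^d)\) from below, uniformly over \(S\in\mathscr S\). We use the change of variables \(c\mapsto(c_0+\sum_{j\ge1}c_j,\,tc_1,\dots,tc_q)\), which relates \(\sum_jc_jg_j\) to \(c_0'e_1+\sum_j c_j'\nabla v_j\) up to the remainder. The key quantity is
\[
\mu\coloneqq\inf_{S\in\mathscr S}\ \lambda_{\min}\bigl(\mathrm{Gram}_{L^2(S)}(e_1,\nabla v_1,\dots,\nabla v_q)\bigr).
\]
We show \(\mu>0\) by compactness. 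Simplices satisfying \cref{eq:family-simplex-regularity} inside \(\overline{D_0}\) are parametrized by their vertex tuples, and these form a compact set \(\mathcal S\) of nondegenerate simplices: the bounds \(\Diam\ge h_-\) and \(\Diam/r_S\le\gamma\) keep the inradius at least \(h_-/\gamma\), and both conditions are closed. On \(\mathcal S\), the map from a simplex to its Gram matrix is continuous, because the fields are continuous on \(\overline{D_0}\) and the indicator functions converge in \(L^1\) when the vertices converge. For each such \(S\), the Gram matrix is positive definite: \(S\) has nonempty interior contained in \(D\), so \cref{lem:dipoles} applies with \(O=\operatorname{int}S\). Continuity of \(\lambda_{\min}\) on the compact set \(\mathcal S\) then gives \(\mu>0\), depending only on \(d,q,D,D_0,h_\pm,\gamma\) and the fixed bumps \(b_j\).

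Finally, for a unit \(c\in\mathbb R^{q+1}\), set \(c_0'=c_0+\sum_{j\ge1} c_j\) and \(c_j'=tc_j\). Then
\[
\Bigl\lVert\sum_jc_jg_j\Bigr\rVert_{L^2(S)}\ge\sqrt\mu\,\lvert c'\rvert-Ct^2\lvert S\rvert^{1/2}\sqrt q.
\]
Since \(\lvert c'\rvert\ge t\lvert c\rvert\) whenever \(t\le1/(1+\sqrt q)\), we get \(\lvert c'\rvert\gtrsim t\). Both cases work: if \(\lvert c_0'\rvert\) is small, then some \(\lvert c_j\rvert\) with \(j\ge1\) is of order one, which gives \(\lvert c'\rvert\gtrsim t\); otherwise \(\lvert c_0'\rvert\) itself is of order one. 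Choosing \(t\) small compared with \(\sqrt\mu\) absorbs the \(t^2\) remainder, so \(\lambda_{\min}\ge c\,\mu t^2\eqqcolon c_{\mathrm{app}}\). This \(t\) depends only on the listed quantities. The profiles depend on the arbitrary but fixed choices in \cref{lem:dipoles}, which may be made canonically from \(d,q,D\).

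The main obstacle is the uniformity step, namely \(\mu>0\) over all admissible simplices. It requires checking that the admissible vertex set is compact and excludes degenerate simplices, and that \(\lambda_{\min}\) depends continuously on the simplex. The remaining steps are perturbative bookkeeping.
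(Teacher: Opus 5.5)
Your proposal is correct and follows essentially the paper's proof: the same profiles \(a_j=1+tb_j\), the same compactness argument for a uniform lower bound on the Gram matrix of \(e_1,\nabla v_1,\dots,\nabla v_q\), and the same change of variables \(c\mapsto c'\) (the paper's matrix \(A_t\)); your dependence-plus-Cauchy--Schwarz argument replaces the paper's Hilbert--Schmidt/best-rank-\(q\) step. One small slip: \(\lvert c'\rvert\ge t\lvert c\rvert\) is false as stated (for \(q=1\), \(c=(1,-1)/\sqrt2\) gives \(\lvert c'\rvert=t/\sqrt2\)), but inverting \(c\mapsto c'\) gives \(\lvert c'\rvert\ge t\lvert c\rvert/(2+\sqrt q)\) for \(t\le1\), which is all your final absorption step needs.
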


\begin{proof}
Choose \(b_1,\dots,b_q\) and \(v_1,\dots,v_q\) as in \cref{lem:dipoles}.
Consider the ordered vertex tuples of all simplices in \(\overline D_0\) that satisfy \cref{eq:family-simplex-regularity}, not only those in \(\mathscr S\).
These tuples lie in the compact set \(\overline D_0^{\,d+1}\).
Their closure is compact, and the bound \(r_S\ge h_-/\gamma\) prevents a limiting simplex from degenerating.
For every simplex represented by a tuple in this compact closure, the Gram matrix of
\[
    e_1,\nabla v_1,\dots,\nabla v_q
\]
in \(L^2(S;\mathbb R^d)\) is positive definite.
Indeed, the interior of every limiting simplex is a nonempty open subset of \(D\), so \cref{lem:dipoles} applies.
To see the dependence on \(S\) explicitly, pull \(S\) back from a fixed reference simplex by its affine vertex map.
Each Gram entry is then an integral over the reference simplex of a jointly continuous integrand, multiplied by the absolute Jacobian determinant of the affine map.
Since the limiting simplices are nondegenerate, these integrals depend continuously on the ordered vertices.
Compactness gives a uniform positive lower bound for its smallest eigenvalue.

Denote the preceding uniform Gram lower bound by \(\lambda_0>0\), and set
\[
    B_\infty\coloneqq\max_{1\le j\le q}\norm{b_j}_{L^\infty(Y)}.
\]
Set \(a_0\coloneqq1\) and \(a_j\coloneqq1+tb_j\) for \(1\le j\le q\).
We now choose \emph{one} \(t>0\) satisfying
\[
    tB_\infty\le\rho-1
\]
and small enough for the perturbation argument below.
Thus, \(t\) is allowed to depend on \(\rho\), on the fixed profiles \(b_j\), and on the geometric data entering \(\lambda_0\), but it is independent of \(\mathscr S\) and \(S\in\mathscr S\).
By \cref{lem:perturbation}, uniformly on \(\overline D_0\),
\[
    g_0=e_1, \qquad \frac{g_j-g_0}{t} = \nabla v_j+\bigO(t) \qquad (1\le j\le q).
\]
Consequently, after decreasing this fixed \(t\) if necessary, the Gram matrix of the transformed family
\[
    g_0,\frac{g_1-g_0}{t},\dots,\frac{g_q-g_0}{t}
\]
has smallest eigenvalue at least \(\lambda_0/2\) for every \(S\in\mathscr S\).
Let \(\widehat e_0,\dots,\widehat e_q\) denote the standard basis of \(\mathbb R^{q+1}\), and let \(A_t\in\mathbb R^{(q+1)\times(q+1)}\) be the matrix whose first column is \(\widehat e_0\) and whose \(j\)-th column is \(\widehat e_0+t\widehat e_j\) for \(1\le j\le q\).
The column family \((g_0,\dots,g_q)\) is obtained from the transformed family by right multiplication with \(A_t\).
Since the chosen \(t\) is positive, \(A_t\) is invertible, and hence
\[
    \lambda_{\min}\bigl(\operatorname{Gram}_S(g_0,\dots,g_q)\bigr)
    \ge \frac{\lambda_0}{2}\,\sigma_{\min}(A_t)^2
    \eqqcolon \lambda_*(t)>0
    \qquad \bigl(S\in\mathscr S\bigr).
\]
This lower bound is uniform in \(S\), not in \(t\).
Its dependence on the fixed \(t\), and therefore on the contrast bound \(\rho\), is retained; indeed, it must deteriorate as \(t\downarrow0\), when \(g_0,\dots,g_q\) coalesce.

To relate this fact to \cref{eq:family-approximation-lower-bound}, define
\[
    B_S\colon\mathbb R^{q+1}\longrightarrow L^2(S;\mathbb R^d), \qquad B_S c\coloneqq\sum_{j=0}^q c_j g_j, \qquad c=(c_0,\dots,c_q).
\]
Every subspace \(Z\) of dimension at most \(q\) is closed.
If \(P_Z\) is its orthogonal projection, then
\[
    \sum_{j=0}^q\Dist(g_j,Z)^2 = \norm{(I-P_Z)B_S}_{\mathrm{HS}}^2.
\]
Here \(\norm{\cdot}_{\mathrm{HS}}\) denotes the Hilbert--Schmidt norm for operators from Euclidean \(\mathbb R^{q+1}\) into \(L^2(S;\mathbb R^d)\).
The best rank-\(q\) approximation inequality for singular values gives
\[
    \norm{(I-P_Z)B_S}_{\mathrm{HS}}^2 \ge \sigma_{q+1}(B_S)^2,
\]
where the singular values are ordered nonincreasingly; moreover, \(\sigma_{q+1}(B_S)^2\) is the smallest eigenvalue of the Gram matrix.
Therefore \cref{eq:family-approximation-lower-bound} holds with \(c_{\mathrm{app}}=\lambda_*(t)\).
As asserted in the proposition, this constant may depend on \(\rho\) through the fixed choice of \(t\), but it is independent of \(\mathscr S\), \(S\), and \(Z\).
\end{proof}

\section{A positive density of elements with patches contained in the core}

The corrector family is used on coarse elements whose \(R\)-layer coefficient-information patches lie in a region where all coefficients in the family coincide.
This section proves that, after choosing the period \(\varepsilon_H=LH\) with \(L\) fixed, every sufficiently fine quasi-uniform mesh contains \(\Theta(H^{-d})\) such elements.
It also records the compactness of their rescaled shapes, which is the geometric input needed to apply the finite-family lower bound uniformly.
The lattice in \cref{fig:safe-element-density} is the periodic array of copies of \(D_-\), not an assumption that the mesh is structured.

\begin{figure}[H]
    \centering
    \includegraphics[width=0.90\textwidth]{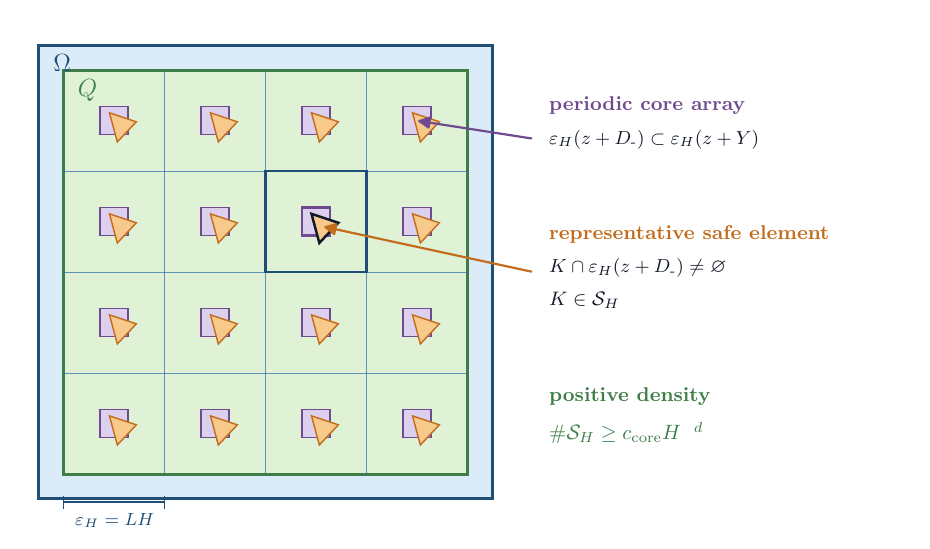}
    \caption{Schematic of the positive-density patch-containment argument.
    The coefficient period is \(\varepsilon_H=LH\); the thin blue lines delimit periodic cells \(\varepsilon_H(z+Y)\), and the purple squares are the corresponding copies of \(D_-\) inside the interior cube \(Q\).
    If \(L\) is chosen large enough, every mesh element meeting one of these copies lies in the corresponding copy of \(D_0\), and its full \(R\)-layer patch lies in the corresponding copy of \(D\).
    Since the union of the \(D_-\) copies occupies a fixed fraction of \(Q\), quasi-uniform volume counting gives \(\#\mathcal S_H\ge c_{\mathrm{core}}H^{-d}\).
    The mesh itself is deliberately not drawn and need not be Cartesian or periodic; the orange simplices represent elements whose patches are contained in the core.}
    \label{fig:safe-element-density}
\end{figure}

The next lemma fixes the physical period relative to \(H\) and counts the elements whose coefficient-information patches remain inside one copy of \(D\).
It also records the rescaled shape bounds needed to apply the finite-family proposition on each such element.

\begin{lemma}[Patch containment on a positive fraction of elements]
\label{lem:safe}
Fix \(R\in\mathbb N_0\) and the cubes in \cref{eq:nested-cubes}.
There exist \(L>1\), a cube \(Q\Subset\Omega\), and constants \(c_{\mathrm{core}},H_0>0\) such that, with
\[
    \varepsilon_H\coloneqq LH,
\]
the set
\[
    \mathcal S_H \coloneqq \left\{ K\in\mathcal T_H \ \middle|\
    \begin{aligned}
        &K\subset Q,\ \text{and for some }z\in\mathbb Z^d,\\
        &K\subset\varepsilon_H(z+D_0), \quad \omega_{R}(K)\subset\varepsilon_H(z+D)
    \end{aligned}
    \right\}
\]
satisfies
\begin{equation}
    \#\mathcal S_H\ge c_{\mathrm{core}}H^{-d}
    \label{eq:safe-element-count}
\end{equation}
whenever \(H\in\mathcal H\) and \(H<H_0\).
Moreover, the collection of rescaled simplices over all such \(H\) and \(K\),
\[
    S_K \coloneqq \varepsilon_H^{-1}K-z, \qquad K\in\mathcal S_H,
\]
lies in \(\overline D_0\), is uniformly shape-regular, and satisfies
\begin{equation}
    \frac{c_{\mathrm{qu}}}{L}\le\Diam(S_K)\le\frac1L, \qquad \frac{\Diam(S_K)}{r_{S_K}}\le\gamma_{\mathrm{sh}}.
    \label{eq:rescaled-safe-regularity}
\end{equation}
The choices of \(L,Q,c_{\mathrm{core}}\), and \(H_0\) may depend only on \(d,R,\Omega\) and the fixed cubes \(D_-,D_0,D\); in particular, they are independent of \(H\), the index set \(\mathcal H\), and the particular mesh family.
\end{lemma}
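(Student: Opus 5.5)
The plan is to choose \(L\) so that any \(R\)-layer patch is small compared with the gap between the nested cubes, and then to count elements by volume. Since every element has diameter at most \(H\), any point of \(\omega_R(K)\) lies within Euclidean distance \((R+1)H\) of every point of \(K\), because each face-adjacency step adds at most one element diameter. Set
\[
\delta_0\coloneqq\Dist(\overline D_-,\mathbb R^d\setminus D_0)>0,\qquad \delta_1\coloneqq\Dist(\overline D_0,\mathbb R^d\setminus D)>0,
\]
and choose \(L>1\) with \(1/L<\delta_0\) and \((R+1)/L<\delta_1\).

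With this choice the containments follow directly. Suppose \(K\) meets \(\varepsilon_H(z+D_-)\). After rescaling by \(\varepsilon_H^{-1}\), \(K\) becomes a set of diameter at most \(1/L<\delta_0\) that meets \(z+D_-\), so it lies in \(z+D_0\). The rescaled patch lies within distance \((R+1)/L<\delta_1\) of the rescaled \(K\), hence inside \(z+D\). The rescaled simplex \(S_K=\varepsilon_H^{-1}K-z\) therefore lies in \(\overline D_0\). It satisfies \(c_{\mathrm{qu}}/L\le\Diam(S_K)\le 1/L\) by \cref{eq:mesh-regularity}, and it keeps the ratio \(\Diam/r\le\gamma_{\mathrm{sh}}\) because that ratio is invariant under scaling and translation. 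This gives \cref{eq:rescaled-safe-regularity}.

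For the count, fix an open cube \(Q\Subset\Omega\), which exists because \(\Omega\) is open, and let \(Q'\Subset Q\) be a concentric cube with half the side length. Let \(\mathcal Z_H\) be the set of \(z\in\mathbb Z^d\) with \(\varepsilon_H(z+Y)\subset Q'\). For small \(H\) we have \(\#\mathcal Z_H\ge c\,\lvert Q'\rvert(LH)^{-d}\). For \(z\in\mathcal Z_H\), the cell copy \(\varepsilon_H(z+D_-)\subset Q'\) has measure \(\lvert D_-\rvert(LH)^d\). The mesh covers \(\Omega\), so the elements meeting this copy cover it. Each element has measure at most \(C_dH^d\), so at least \(\lvert D_-\rvert L^d/C_d\ge1\) elements meet each copy.

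Every element meeting a copy has diameter at most \(H\) and lies within \(H\) of \(Q'\). Taking \(H_0\) smaller than half the distance from \(Q'\) to \(\partial Q\), such an element lies in \(Q\). By the first step it also satisfies both containment conditions, so it belongs to \(\mathcal S_H\).

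Distinct copies lie in distinct interiors of the disjoint cells \(\varepsilon_H(z+D_0)\). An element contained in one of them cannot lie in another, so no element is counted twice. Summing over \(\mathcal Z_H\) gives
\[
\#\mathcal S_H\ge\#\mathcal Z_H\ge c\,\lvert Q'\rvert L^{-d}H^{-d},
\]
which is \cref{eq:safe-element-count}. The constants depend only on \(d,R,\Omega\) and the cubes; \(c_{\mathrm{qu}}\) enters only the lower diameter bound in \cref{eq:rescaled-safe-regularity}.

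The main obstacle is bounding the Euclidean reach of the graph patch by \((R+1)H\) independently of the mesh; the rest is bookkeeping. Adjacent elements share a face, so consecutive elements along an adjacency path intersect. Starting from any point of \(K\), the distance to any point of an element at graph distance \(j\) is therefore at most \((j+1)H\), which gives the bound.
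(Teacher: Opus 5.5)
Your proof is correct and is essentially the paper's argument. It uses the same Euclidean reach $(R+1)H$ of $\omega_R(K)$, the same choice of $L$ from the two cube gaps, and the same containment and rescaling steps. The only difference is in the count. You assign one element to each cell $z\in\mathcal Z_H$ and use disjointness of the copies $\varepsilon_H(z+D_0)$ to get $\#\mathcal S_H\ge\#\mathcal Z_H\gtrsim (LH)^{-d}$. The paper instead bounds $\lvert E_H\rvert\le C_dH^d\,\#\mathcal S_H$ for the part $E_H$ of the periodic $D_-$-copies inside $Q'$, which counts every element meeting $E_H$ and gives a constant without your factor $L^{-d}$.

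Your aside $\lvert D_-\rvert L^d/C_d\ge1$ need not hold for the chosen $L$. Nothing depends on it, though, since each copy has positive measure and is therefore met by at least one element.
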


\begin{proof}
\proofheading{From graph layers to Euclidean distance.}
If \(T_0=K,T_1,\dots,T_j\) is a chain of face-neighboring elements with \(j\le R\), then any point of \(T_j\) is at Euclidean distance at most \((R+1)H\) from \(K\).
Hence
\begin{equation}
    \omega_{R}(K) \subset \bigl\{ x\in\overline\Omega \bigm| \Dist(x,K)\le C_{\mathrm{vis}}H \bigr\}, \qquad C_{\mathrm{vis}}\coloneqq R+1.
    \label{eq:patch-euclidean-bound}
\end{equation}

\proofheading{Choice of the physical period.}
Choose axis-parallel cubes \(Q'\Subset Q\Subset\Omega\), and denote the side length of \(Q'\) by \(\ell_Q\).
Set
\[
    \delta_{-0} \coloneqq \Dist(\overline D_-,\mathbb R^d\setminus D_0)>0, \qquad \delta_{0D} \coloneqq \Dist(\overline D_0,\mathbb R^d\setminus D)>0.
\]
Fix \(L>1\) so that
\begin{equation}
    L\delta_{-0}>2, \qquad L\delta_{0D}>2C_{\mathrm{vis}}.
    \label{eq:period-separation-choice}
\end{equation}
Finally choose \(H_0>0\) small enough that
\begin{equation}
    LH_0<\frac{\ell_Q}{4}, \qquad H_0<\frac12\Dist(\overline Q',\Omega\setminus Q).
    \label{eq:safe-H0-choice}
\end{equation}

\proofheading{Full-cell count and a volume lower bound.}
Define the periodically repeated inner cores
\[
    E_H \coloneqq Q' \cap \bigcup_{z\in\mathbb Z^d} \varepsilon_H(z+D_-).
\]
In each coordinate direction, the interval defining \(Q'\) contains at least
\[
    \frac{\ell_Q}{\varepsilon_H}-2 \ge \frac{\ell_Q}{2\varepsilon_H}
\]
closed intervals of the form \(\varepsilon_H(z_j+[-1/2,1/2])\); the last inequality follows from \(\varepsilon_H<\ell_Q/4\).
Hence at least
\[
    N_{\mathrm{cell}}(H) \ge \left(\frac{\ell_Q}{2\varepsilon_H}\right)^d
\]
closed cells \(\varepsilon_H(z+\overline Y)\) are contained in \(Q'\).
Their copies of \(\varepsilon_H(z+D_-)\) are pairwise disjoint and lie in \(E_H\).
It follows that
\begin{equation}
    \lvert E_H\rvert \ge N_{\mathrm{cell}}(H)\varepsilon_H^d\lvert D_-\rvert \ge 2^{-d}\ell_Q^d\lvert D_-\rvert \eqqcolon c_E>0.
    \label{eq:inner-core-volume-bound}
\end{equation}

\proofheading{Patch containment for elements meeting \(E_H\).}
Let \(K\in\mathcal T_H\) satisfy \(\lvert K\cap E_H\rvert>0\).
By the definition of \(E_H\), there exist \(z\in\mathbb Z^d\) and a point
\[
    x_0\in K\cap Q'\cap\varepsilon_H(z+D_-).
\]
For every \(x\in K\),
\[
    \lvert x-x_0\rvert \le \Diam(K) \le H < \varepsilon_H\delta_{-0},
\]
where the strict inequality follows from \cref{eq:period-separation-choice}.
The definition of \(\delta_{-0}\) therefore gives
\begin{equation}
    K\subset\varepsilon_H(z+D_0).
    \label{eq:safe-K-in-D0}
\end{equation}
Moreover, every point of \(K\) is within distance \(H\) of \(x_0\).
Together with \cref{eq:safe-H0-choice}, this implies \(K\subset Q\).

If \(x\in\omega_{R}(K)\), then \cref{eq:patch-euclidean-bound,eq:safe-K-in-D0} give
\[
    \Dist\bigl( x,\varepsilon_H(z+\overline D_0) \bigr) \le C_{\mathrm{vis}}H < \varepsilon_H\delta_{0D}.
\]
By the definition of \(\delta_{0D}\), this proves
\[
    \omega_{R}(K)\subset\varepsilon_H(z+D).
\]
Thus \(K\in\mathcal S_H\).

\proofheading{Conversion of volume to element count.}
The mesh elements cover \(E_H\) up to their measure-zero interfaces.
Since every simplex of diameter at most \(H\) has volume at most \(C_d H^d\), where \(C_d\) depends only on \(d\), \cref{eq:inner-core-volume-bound} yields
\[
    c_E \le \lvert E_H\rvert \le \sum_{\substack{K\in\mathcal T_H\\ \lvert K\cap E_H\rvert>0}} \lvert K\rvert \le C_d H^d\#\mathcal S_H.
\]
Consequently \cref{eq:safe-element-count} holds with \(c_{\mathrm{core}}\coloneqq c_E/C_d\).

\proofheading{Regularity of the rescaled elements.}
Scaling preserves shape regularity, and every \(S_K\) lies in \(\overline D_0\).
The definition \(\varepsilon_H=LH\) and \cref{eq:mesh-regularity} therefore give \cref{eq:rescaled-safe-regularity}.
\end{proof}

\section{Realizing the corrector fields by smooth right-hand sides}
\label{sec:homogenization}

It remains to realize the cell fields constructed in \cref{sec:finite-family} as gradients of exact solutions of the original boundary-value problem.
This section constructs smooth compactly supported right-hand sides whose homogenized solutions are affine on the cube \(Q\), so that strong corrector convergence yields the corresponding fields \(g_s\) on the elements in \(\mathcal S_H\).

The first result in this section isolates the only external homogenization input.
It is stated in the form needed below.
A smooth homogenized solution with a smooth compactly supported right-hand side admits the standard strong first-order corrector expansion.

\begin{proposition}[Periodic corrector convergence used here]
\label{prop:allaire-corrector}
Let \(a\in C^\infty_{\Per}(Y)\) satisfy \(1\le a\le\rho\), let \(A^{\mathrm{hom}}\) be its homogenized matrix, and let \(\chi_{a,\ell}\) be the mean-zero cell correctors from \cref{eq:cell-problem}.
For \(U\in C_{\mathrm c}^\infty(\Omega)\), set
\[
    f\coloneqq-\nabla\cdot(A^{\mathrm{hom}}\nabla U).
\]
If \(\varepsilon\to0\), \(\kappa_\varepsilon(x)\coloneqq a(x/\varepsilon)\), and \(u_\varepsilon\coloneqq u_{\kappa_\varepsilon,f}\), then
\begin{equation}
    \norm{ \nabla u_\varepsilon-\nabla U-\sum_{\ell=1}^d\nabla_y\chi_{a,\ell}(x/\varepsilon)\,\partial_\ell U }_{L^2(\Omega)} \longrightarrow0.
    \label{eq:allaire-corrector-input}
\end{equation}
\end{proposition}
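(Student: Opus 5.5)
This is the classical strong first-order corrector result of periodic homogenization. I would prove it by the energy method applied to the two-scale ansatz, exploiting the smoothness of \(a\) and \(U\) so that no boundary-layer cutoff is needed.

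\emph{The ansatz.} Define
\[
    w_\varepsilon(x)\coloneqq U(x)+\varepsilon\sum_{\ell=1}^d\chi_{a,\ell}(x/\varepsilon)\,\partial_\ell U(x).
\]
Since \(a\) is smooth, elliptic regularity on the torus gives \(\chi_{a,\ell}\in C^\infty_{\Per}(Y)\). Because \(U\in C_{\mathrm c}^\infty(\Omega)\), we have \(w_\varepsilon\in H_0^1(\Omega)\), and no boundary corrector is required. Its gradient is
\[
    \nabla w_\varepsilon=\nabla U+\sum_\ell\nabla_y\chi_{a,\ell}(x/\varepsilon)\,\partial_\ell U+\varepsilon\sum_\ell\chi_{a,\ell}(x/\varepsilon)\nabla\partial_\ell U.
\]
The last term is \(\bigO(\varepsilon)\) in \(L^2\). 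It therefore suffices to show \(\norm{\nabla(u_\varepsilon-w_\varepsilon)}_{L^2}\to0\).

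\emph{The residual.} Set \(\kappa_\varepsilon=a(\cdot/\varepsilon)\), so that \(\kappa_\varepsilon\ge1\). For \(\varphi\in H_0^1(\Omega)\), compute
\[
    a_{\kappa_\varepsilon}(u_\varepsilon-w_\varepsilon,\varphi)=\int_\Omega\bigl(A^{\mathrm{hom}}\nabla U-\kappa_\varepsilon\nabla w_\varepsilon\bigr)\cdot\nabla\varphi.
\]
Expanding \(\kappa_\varepsilon\nabla w_\varepsilon\) gives
\[
    \kappa_\varepsilon\nabla w_\varepsilon=\sum_\ell\bigl[a(e_\ell+\nabla_y\chi_{a,\ell})\bigr](x/\varepsilon)\,\partial_\ell U+\bigO(\varepsilon).
\]
Introduce the flux defect
\[
    G_\ell(y)\coloneqq a(y)\bigl(e_\ell+\nabla_y\chi_{a,\ell}(y)\bigr)-A^{\mathrm{hom}}e_\ell.
\]
It is smooth, periodic, mean zero by the definition of \(A^{\mathrm{hom}}\), and divergence-free by the cell problem \cref{eq:cell-problem}. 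The standard flux-corrector lemma then yields a smooth periodic skew-symmetric matrix \(B_\ell\) with
\[
    G_\ell=\nabla_y\cdot B_\ell.
\]
To build it, solve \(\Delta\Phi_\ell=G_\ell\) componentwise on the torus and set
\[
    (B_\ell)_{ij}=\partial_i(\Phi_\ell)_j-\partial_j(\Phi_\ell)_i,
\]
using \(\nabla\cdot G_\ell=0\).

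\emph{Conversion to \(\bigO(\varepsilon)\).} Because \(B_\ell\) is skew, the identity
\[
    G_\ell(x/\varepsilon)\partial_\ell U=\varepsilon\,\nabla\cdot\bigl(B_\ell(x/\varepsilon)\partial_\ell U\bigr)-\varepsilon B_\ell(x/\varepsilon)\nabla\partial_\ell U
\]
holds. The divergence of a skew field paired with \(\nabla\varphi\) integrates to zero: this is \(\operatorname{div}\operatorname{div}\) of a skew tensor, and \(\varphi\) vanishes on \(\partial\Omega\). Hence the whole residual is bounded by \(C\varepsilon\norm{\nabla\varphi}_{L^2}\). Testing with \(\varphi=u_\varepsilon-w_\varepsilon\) gives
\[
    \norm{\nabla(u_\varepsilon-w_\varepsilon)}_{L^2}\le C\varepsilon,
\]
which is in fact a rate. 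This proves \cref{eq:allaire-corrector-input}.

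The only delicate point is constructing the skew flux corrector \(B_\ell\) and checking the \(\operatorname{div}\operatorname{div}\) cancellation. Everything else is routine because of the smoothness assumptions. Alternatively, one may simply cite \cite[Theorem~2.6]{Allaire1992} directly.
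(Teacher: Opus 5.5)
Your proof is correct, but it takes a genuinely different route from the paper's.

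The paper does not prove \cref{eq:allaire-corrector-input} directly. It specializes Allaire's two-scale compactness theorem and strong corrector theorem \cite[Theorems~2.3 and~2.6]{Allaire1992} to \(A(x,y)=a(y)I\). It then identifies the microscopic part of the two-scale limit as \(\sum_\ell\chi_{a,\ell}(y)\partial_\ell U(x)\), and checks that smoothness of \(a\) and \(U\) makes \(\nabla_y u_1\) an admissible test function. Finally, it uses uniqueness of the limit to pass from subsequences to the whole family, so the conclusion is purely qualitative.

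You instead run the classical energy estimate on the explicit ansatz \(w_\varepsilon\). You use the compact support of \(U\) to place \(w_\varepsilon\) in \(H_0^1(\Omega)\) without a boundary-layer cutoff. You then use a skew flux corrector \(B_\ell\) to turn the mean-zero, divergence-free flux defect into an \(\bigO(\varepsilon)\) residual. The individual steps check out:
\begin{itemize}
\item The mean-zero property of \(G_\ell\) uses \(A^{\mathrm{hom}}e_\ell=\int_Y a(e_\ell+\nabla\chi_{a,\ell})\). This agrees with the paper's variational formula \cref{eq:homogenized-variational-formula} after polarization and testing the cell problem with \(\chi_{a,k}\).
\item The identity \(\nabla_y\cdot B_\ell=G_\ell\) needs the observation that \(\nabla\cdot\Phi_\ell\) is harmonic on the torus, hence constant.
\item The sign in your flux identity depends on whether the divergence acts on rows or columns. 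This is immaterial, since that term is \(\bigO(\varepsilon)\).
\item The div--div cancellation holds for all \(\varphi\in H_0^1(\Omega)\) by density, because \(B_\ell(x/\varepsilon)\partial_\ell U\) is smooth and compactly supported in \(\Omega\).
\end{itemize}

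Your route buys a self-contained argument with the explicit rate \(C_{a,U}\,\varepsilon\). With \(\varepsilon_H=LH\), this would make the threshold \(H_{\mathrm{corr}}\) in Step~3 of the proof of \cref{prop:finite-family-realization} quantitative. The cost is the flux-corrector construction. It also relies essentially on \(U\in C_{\mathrm c}^\infty(\Omega)\); for a homogenized solution not vanishing near \(\partial\Omega\) you would need a boundary-layer cutoff and would lose the \(\bigO(\varepsilon)\) rate. The paper's citation is shorter and suffices there, because only the \(o(1)\) statement is used.
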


\begin{proof}
This is the only external homogenization result used in the paper.
It is the specialization of Allaire's two-scale homogenization theorem and strong gradient-corrector theorem \cite[Theorems~2.3 and~2.6, pp.~1494 and~1496]{Allaire1992} to the matrix \(A(x,y)=a(y)I\).
The coefficient is smooth, \(Y\)-periodic, uniformly elliptic, and independent of \(x\), so it satisfies the admissibility hypotheses of \cite[Theorem~2.3]{Allaire1992}.
The homogenized solution is \(U\), because \(f=-\nabla\cdot(A^{\mathrm{hom}}\nabla U)\) and \(U\in H_0^1(\Omega)\).
In Allaire's notation, the microscopic part of the two-scale limit is
\[
    u_1(x,y)=\sum_{\ell=1}^d\chi_{a,\ell}(y)\,\partial_\ell U(x).
\]
Since \(a\) and \(U\) are smooth, elliptic regularity for the periodic cell problems makes \(\nabla_y u_1\) smooth in both variables.
It is therefore an admissible test function in the strong corrector theorem; see the first paragraph of the proof of \cite[Theorem~2.6, p.~1496]{Allaire1992}.
Translating Allaire's periodic unit cell to \(Y=(-1/2,1/2)^d\) does not change the theorem.
Uniqueness of the homogenized solution and of the mean-zero cell correctors identifies every subsequential limit, so \cref{eq:allaire-corrector-input} holds for the whole family \(\varepsilon\to0\).
\end{proof}

The next lemma chooses one common macroscopic function \(U\) for the whole finite family.
Because \(U\) is affine on \(Q\), the first-order expansion on \(Q\) reduces to the first cell-corrector field \(g_j\); this makes the local finite-family lower bound applicable to exact solutions.

\begin{lemma}[Strong corrector convergence for smooth right-hand sides]
\label{lem:homogenization}
Let \(a_0,\dots,a_q\) be the coefficients from \cref{prop:finite-corrector-family}, and let \(A_j^{\mathrm{hom}}\) be the homogenized matrix associated with \(a_j\).
Choose \(U\in C_{\mathrm c}^\infty(\Omega)\) such that
\begin{equation}
    U(x)=x_1 \quad \text{on a neighborhood of }\overline Q,
    \label{eq:affine-homogenized-solution}
\end{equation}
where \(Q\) is supplied by \cref{lem:safe}, and define
\begin{equation}
    f_j \coloneqq -\nabla\cdot(A_j^{\mathrm{hom}}\nabla U) \in C_{\mathrm c}^\infty(\Omega)
    \qquad (0\le j\le q).
    \label{eq:family-source}
\end{equation}
For
\[
    \kappa_{H,j}(x) \coloneqq a_j(x/\varepsilon_H)
\]
and \(u_{H,j}\coloneqq u_{\kappa_{H,j},f_j}\), one has, for every \(j\in\{0,\dots,q\}\),
\begin{equation}
    \norm{ \nabla u_{H,j} - \nabla U - \sum_{\ell=1}^d \nabla_y\chi_{a_j,\ell}(x/\varepsilon_H) \,\partial_\ell U }_{L^2(\Omega)} \longrightarrow0,
    \label{eq:strong-corrector-convergence}
\end{equation}
as \(H\to0\) through \(\mathcal H\).
In particular, recalling from \cref{prop:finite-corrector-family} that
\[
    g_j\coloneqq e_1+\nabla\chi_{a_j,1},
\]
it follows that
\begin{equation}
    \norm{\nabla u_{H,j}-g_j(x/\varepsilon_H)}_{L^2(Q)} \longrightarrow0.
    \label{eq:corrector-convergence-on-Q}
\end{equation}
There is a constant \(C_U\), independent of \(H\) and of \(j\in\{0,\dots,q\}\), such that
\begin{equation}
    0<\norm{f_j}_{L^2(\Omega)}\le C_U
    \qquad (0\le j\le q).
    \label{eq:source-norm-bound}
\end{equation}
The constant \(C_U\) may be chosen to depend only on \(d,\rho\), and \(\norm{D^2U}_{L^2(\Omega)}\).
Only this uniform upper bound, rather than a uniform positive lower bound for \(\norm{f_j}_{L^2(\Omega)}\), is used below.
\end{lemma}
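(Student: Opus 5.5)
The plan is to apply \cref{prop:allaire-corrector} once for each profile \(a_j\), with \(\varepsilon=\varepsilon_H=LH\). Since \(L\) is fixed by \cref{lem:safe}, \(\varepsilon_H\to0\) exactly when \(H\to0\) through \(\mathcal H\). The hypotheses hold for each \(j\).

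\begin{itemize}
    \item \(a_j\in C^\infty_{\Per}(Y)\) with \(1\le a_j\le\rho\), by \cref{eq:family-coefficients}.
    \item \(U\in C_{\mathrm c}^\infty(\Omega)\).
    \item \(f_j\) is defined as \(-\nabla\cdot(A_j^{\mathrm{hom}}\nabla U)\).
\end{itemize}

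Because \(A_j^{\mathrm{hom}}\) is a constant matrix, \(f_j=-\sum_{\ell,m}(A_j^{\mathrm{hom}})_{\ell m}\partial_\ell\partial_m U\). This lies in \(C_{\mathrm c}^\infty(\Omega)\), with support inside \(\Supp U\). The convergence \cref{eq:strong-corrector-convergence} is then literally \cref{eq:allaire-corrector-input} for the finitely many indices \(j\). Convergence for the finite family is automatic, since there are only \(q+1\) sequences. The existence of \(U\) needs only a cutoff: take \(\zeta\in C_{\mathrm c}^\infty(\Omega)\) equal to one on a neighborhood of \(\overline Q\), which is possible because \(Q\Subset\Omega\), and set \(U=\zeta x_1\).

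For \cref{eq:corrector-convergence-on-Q}, I restrict \cref{eq:strong-corrector-convergence} to \(Q\). On a neighborhood of \(\overline Q\) we have \(\nabla U=e_1\), \(\partial_1U=1\), and \(\partial_\ell U=0\) for \(\ell\ge2\). So the expansion there reduces to \(e_1+\nabla_y\chi_{a_j,1}(x/\varepsilon_H)=g_j(x/\varepsilon_H)\). The \(L^2(Q)\) norm is bounded by the \(L^2(\Omega)\) norm, which gives the claim. The choice of representative of \(\chi_{a_j,1}\) does not matter, since only gradients appear.

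For \cref{eq:source-norm-bound}, the upper bound uses the standard homogenized-matrix bounds. For scalar coefficients with \(1\le a\le\rho\), the matrix \(A^{\mathrm{hom}}\) is symmetric with \(I\le A^{\mathrm{hom}}\le\rho I\). The upper bound comes from testing the energy characterization \(\xi\cdot A^{\mathrm{hom}}\xi=\min_{\varphi}\int_Y a\lvert\xi+\nabla\varphi\rvert^2\) with \(\varphi=0\). The lower bound follows from Jensen's inequality, since \(\int_Y\nabla\varphi=0\). Hence every entry is bounded by \(\rho\). Pointwise, \(\lvert f_j\rvert\le\rho\sum_{\ell,m}\lvert\partial_\ell\partial_mU\rvert\le\rho\, d\,\lvert D^2U\rvert\), which gives \(\norm{f_j}_{L^2(\Omega)}\le C_U\) with \(C_U=d\rho\norm{D^2U}_{L^2(\Omega)}\).

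For positivity, suppose \(f_j=0\). Then \(U\in H_0^1(\Omega)\) solves the homogenized problem with zero data. Coercivity of \(A_j^{\mathrm{hom}}\), which follows from \(A_j^{\mathrm{hom}}\ge I\), then forces \(U=0\). This contradicts \(U=x_1\) near \(\overline Q\).

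I see no real obstacle here, since the heavy input is the external \cref{prop:allaire-corrector}. The only point needing care is the ellipticity bounds for \(A^{\mathrm{hom}}\), which supply both the \(j\)-uniform constant and the nonvanishing of \(f_j\). I would prove these directly from the variational characterization above, which follows from testing \cref{eq:cell-problem}.
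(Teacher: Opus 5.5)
Your proposal is correct and follows essentially the paper's argument. You apply \cref{prop:allaire-corrector} to each of the finitely many profiles with \(\varepsilon=\varepsilon_H\), restrict to \(Q\) where \(\nabla U=e_1\), and use \(I\preceq A_j^{\mathrm{hom}}\preceq\rho I\) from the variational cell formula both for the uniform source bound and for \(f_j\neq0\). The only difference is that your entrywise estimate gives \(C_U=d\rho\norm{D^2U}_{L^2(\Omega)}\) rather than the paper's \(\sqrt d\,\rho\norm{D^2U}_{L^2(\Omega)}\), and both constants depend only on \(d\), \(\rho\), and \(\norm{D^2U}_{L^2(\Omega)}\).
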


\begin{proof}
Here \(\nabla_y\) denotes differentiation with respect to the periodic cell variable \(y\).
Fix \(j\in\{0,\dots,q\}\).
The periodic cell formula and its variational characterization \cite[Section~4.5.4 and Problem~18 in Section~4.8]{Evans2010} give, for every \(\xi\in\mathbb R^d\),
\begin{equation}
    \xi\cdot A_j^{\mathrm{hom}}\xi
    =
    \min_{\phi\in H^1_{\Per}(Y)/\mathbb R}
    \int_Y a_j(y)\lvert \xi+\nabla\phi(y)\rvert^2\di y.
    \label{eq:homogenized-variational-formula}
\end{equation}
Since \(\int_Y\nabla\phi=0\) for periodic \(\phi\), \cref{eq:homogenized-variational-formula} and \(1\le a_j\le\rho\) imply
\[
    I\preceq A_j^{\mathrm{hom}}\preceq\rho I.
\]
Here \(A\preceq B\) means that \(B-A\) is positive semidefinite.
Consequently, \cref{eq:family-source} is smooth and compactly supported, and one may take
\[
    C_U=\sqrt d\,\rho\norm{D^2U}_{L^2(\Omega)}.
\]
It is nonzero: otherwise \(U\in H_0^1(\Omega)\) would be \(A_j^{\mathrm{hom}}\)-harmonic, so testing its equation with \(U\) would give \(U=0\), contradicting \cref{eq:affine-homogenized-solution}.

\cref{prop:allaire-corrector}, applied to \(a=a_j\) and \(\varepsilon=\varepsilon_H\), gives \cref{eq:strong-corrector-convergence} as \(H\to0\) through \(\mathcal H\).
Since \(\nabla U=e_1\) on \(Q\), only the first cell corrector remains there, which gives \cref{eq:corrector-convergence-on-Q}.
The family is finite, so the convergence and the source bounds are uniform over \(j=0,\dots,q\).
\end{proof}

\section{Proof of the order-one lower bound}
\label{sec:main-proof}

We now combine the preceding results.
The proof uses four results: the common local space in \cref{lem:restriction}, the finite-family approximation bound in \cref{prop:finite-corrector-family}, the element count in \cref{lem:safe}, and the strong corrector convergence in \cref{lem:homogenization}.
The proposition is stated separately from the proof of \cref{thm:main} because it records the stronger finite-family conclusion: the profiles and sources are fixed before \(H\) and before the rule, while only the selected index is chosen after the rule is known.
\begin{proposition}[Finite-family realization]
\label{prop:finite-family-realization}
Under the assumptions of \cref{thm:main}, let \(q\) be defined by \cref{eq:q-and-R}.
There exist \(L>1\), constants \(c_*,H_*>0\), coefficient profiles
\[
    a_0,\dots,a_q\in C^\infty_{\Per}(Y), \qquad 1\le a_j\le\rho \qquad (0\le j\le q),
\]
and nonzero right-hand sides
\[
    f_0,\dots,f_q\in C_{\mathrm c}^\infty(\Omega)
\]
with the following property.
All these objects can be chosen independently of \(H\) and of the selection rule.
For every \(H\in\mathcal H\) with \(0<H<H_*\) and every fixed-visibility rule \(\mathscr M\) with parameters \((H,m,k,C_{\mathrm{dim}},C_{\mathrm{loc}})\), there exists \(s\in\{0,\dots,q\}\) such that, after setting
\begin{equation}
    \kappa_{H,s}(x)\coloneqq a_s\bigl(x/(LH)\bigr),
    \label{eq:finite-family-coefficient}
\end{equation}
and
\[
    V_{H,s}\coloneqq V_H^\mathscr M(\kappa_{H,s}), \qquad
    u_{H,s}\coloneqq u_{\kappa_{H,s},f_s},
\]
and letting \(u_{\kappa_{H,s},f_s,H}\in V_{H,s}\) denote the corresponding Galerkin solution, one has
\begin{equation}
    \frac{
        \norm{
            u_{H,s}
            -u_{\kappa_{H,s},f_s,H}
        }_{a_{\kappa_{H,s}}}
    }{
        \norm{f_s}_{L^2(\Omega)}
    }
    \ge c_*.
    \label{eq:finite-family-lower-bound}
\end{equation}
The number \(L\), the profiles \(a_j\), the right-hand sides \(f_j\), and the constants \(c_*,H_*\) may depend only on
\[
    d,\rho,m,k,C_{\mathrm{loc}},\Omega,\gamma_{\mathrm{sh}},c_{\mathrm{qu}}.
\]
In particular, these objects are independent of \(C_{\mathrm{dim}}\), \(H\), the selection rule, and the particular mesh family satisfying the stated regularity bounds.
\end{proposition}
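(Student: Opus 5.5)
The plan is to fix all objects before \(H\) and the rule, and then pick the index \(s\) by pigeonhole on the summed local approximation errors. The case \(q=0\) was handled after \cref{lem:restriction}, so assume \(q\ge1\) and set \(R=2m+k\) as in \cref{eq:q-and-R}.

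\textbf{Fixing the data.} First apply \cref{lem:safe} with this \(R\) to obtain \(L\), \(Q\), \(c_{\mathrm{core}}\) and \(H_0\). Next apply \cref{prop:finite-corrector-family} with \(h_-=c_{\mathrm{qu}}/L\), \(h_+=1/L\) and \(\gamma=\gamma_{\mathrm{sh}}\). By \cref{eq:rescaled-safe-regularity}, the family \(\mathscr S\) of rescaled simplices \(S_K\) is admissible; this yields profiles \(a_0,\dots,a_q\) equal to one on \(D\) and a constant \(c_{\mathrm{app}}\). Finally, choose \(U\) and \(f_j\) as in \cref{lem:homogenization}. All of these depend only on the listed parameters.

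\textbf{Common local spaces.} Fix \(H<H_0\) and a rule. For \(K\in\mathcal S_H\), the patch \(\omega_R(K)\) lies in \(\varepsilon_H(z+D)\), where every \(\kappa_{H,j}\equiv1\). So \cref{lem:restriction} gives a single space \(Y_K\coloneqq Y_K^{\kappa_{H,j}}\), independent of \(j\), with \(\dim Y_K\le q\). Let \(Z_K\) be the space of gradients of elements of \(Y_K\); it is a subspace of \(L^2(K;\mathbb R^d)\) of dimension at most \(q\).

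\textbf{Local lower bound.} Change variables \(x=\varepsilon_H(z+y)\), which maps \(K\) onto \(S_K\). Then \(g_j(x/\varepsilon_H)\) becomes \(g_j(y)\), the gradients in \(Z_K\) pull back to a space of dimension at most \(q\) (up to the factor \(\varepsilon_H\)), and \(L^2\) norms scale by \(\varepsilon_H^{d/2}\). \cref{eq:family-approximation-lower-bound} then gives
\[
    \sum_{j=0}^q \Dist_{L^2(K)}\bigl(g_j(\cdot/\varepsilon_H),Z_K\bigr)^2 \ge c_{\mathrm{app}}\varepsilon_H^d=c_{\mathrm{app}}L^dH^d .
\]

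\textbf{Transfer to exact solutions.} Let \(w_j\) be the Galerkin error for index \(j\), and set \(e_j\coloneqq\norm{\nabla u_{H,j}-g_j(\cdot/\varepsilon_H)}_{L^2(Q)}\), which tends to \(0\) by \cref{eq:corrector-convergence-on-Q}. Since \(\kappa\ge1\), we have \(\norm{w_j}_{a_\kappa}^2\ge\sum_{K\in\mathcal S_H}\Dist_{L^2(K)}(\nabla u_{H,j},Z_K)^2\). Apply \((x+y)^2\ge x^2/2-y^2\) elementwise, sum over \(K\in\mathcal S_H\subset Q\) (these elements are disjoint), and sum over \(j\). Together with \cref{eq:safe-element-count} this gives
\[
    \sum_j\norm{w_j}_{a_\kappa}^2\ge\tfrac12 c_{\mathrm{app}}L^dc_{\mathrm{core}}-\sum_j e_j^2 .
\]
Choose \(H_*\le H_0\) so that \(\sum_je_j^2\le\tfrac14c_{\mathrm{app}}L^dc_{\mathrm{core}}\) for all \(H<H_*\); this is possible because the family is finite and the convergence does not depend on the rule. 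Some \(s\) then satisfies
\[
    \norm{w_s}^2_{a_\kappa}\ge \frac{c_{\mathrm{app}}L^dc_{\mathrm{core}}}{4(q+1)} .
\]
Dividing by \(\norm{f_s}\le C_U\) from \cref{eq:source-norm-bound} gives \(c_*\).

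\textbf{Main obstacle.} The delicate point is the uniformity of \(H_*\). The rate in \cref{eq:corrector-convergence-on-Q} is not quantified, and it depends only on \(a_j\), \(U\), \(\Omega\) and the sequence of values \(\varepsilon_H=LH\). Because it depends on \(H\) only through \(\varepsilon_H\), I would define \(H_*\) from the function \(\varepsilon\mapsto\sup\) of the errors as \(\varepsilon\to0\), by taking the limit over a continuum of \(\varepsilon\). This makes \(H_*\) independent of the mesh family and of \(\mathcal H\), as the statement requires.
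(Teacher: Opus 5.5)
Your proposal is correct and follows the paper's proof essentially step for step. It uses $R=2m+k$ in \cref{lem:safe}, the common local space from \cref{lem:restriction}, the scaling identity giving $c_{\mathrm{app}}L^dH^d$ per element, and the transfer $\Dist(g,Z)^2\le 2\Dist(\nabla u,Z)^2+2\lVert \nabla u-g\rVert^2$. It then takes a pigeonhole over $s$, divides by $C_U$, and sets $H_*=\min\{H_0,H_{\mathrm{corr}}\}$, with $H_{\mathrm{corr}}$ determined through $\varepsilon_H=LH$ alone, exactly as you describe. The one difference is minor: the paper applies \cref{prop:finite-corrector-family} to the family $\mathscr S_*$ of all admissible simplices in $\overline D_0$ rather than to the rescaled mesh elements $S_K$. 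This makes the independence of the profiles from the mesh family immediate, rather than resting on that proposition's independence-of-$\mathscr S$ clause.
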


\begin{proof}[Proof of \cref{prop:finite-family-realization}]
The case \(q=0\) was settled before the corrector-family construction, so assume \(q\ge1\).
Apply \cref{lem:safe} with \(R=2m+k\), and let \(\mathscr S_*\) be the family of all closed simplices \(S\subset\overline D_0\) such that
\[
    \frac{c_{\mathrm{qu}}}{L} \le \Diam(S) \le \frac1L, \qquad \frac{\Diam(S)}{r_S} \le \gamma_{\mathrm{sh}},
\]
where \(r_S\) is the inradius.
This is a uniformly shape-regular family with diameters bounded above and away from zero, and it depends only on the displayed fixed parameters, not on the mesh family.
Every rescaled simplex \(S_K\) with \(K\in\mathcal S_H\) belongs to \(\mathscr S_*\) by \cref{eq:rescaled-safe-regularity}.
\cref{prop:finite-corrector-family} applied to \(\mathscr S_*\) therefore gives coefficients \(a_0,\dots,a_q\) and a constant \(c_{\mathrm{app}}>0\) that are independent of \(H\) and of the rule.

\proofheading{Step 1: A common local space.}
Fix a sufficiently small \(H\) and a rule \(\mathscr M\).
For \(s\in\{0,\dots,q\}\), let \(\kappa_{H,s}\) be the coefficient defined in \cref{eq:finite-family-coefficient}; each belongs to \(\mathcal K_\rho\).
Denote the corresponding selected space by
\[
    V_{H,s}\coloneqq V_H^\mathscr M(\kappa_{H,s}).
\]
If \(K\in\mathcal S_H\), then, for its associated \(z\in\mathbb Z^d\),
\[
    \omega_R(K) \subset \varepsilon_H(z+D).
\]
Periodicity and \cref{eq:family-coefficients} show that all \(\kappa_{H,s}\) equal one on this patch.
\cref{lem:restriction} therefore shows that the spaces \(Y_{K,s}\coloneqq Y_K^{\kappa_{H,s}}\) are independent of \(s\) and have dimension at most \(q\).
Denote this common space by
\begin{equation}
    Y_K \coloneqq Y_{K,s} \quad (s=0,\dots,q), \qquad \dim Y_K\le q.
    \label{eq:common-safe-restriction}
\end{equation}

The gradient map is well defined and injective on \(H^1(K)/\mathbb R\).
Hence
\[
    Z_K \coloneqq \bigl\{ \nabla w \bigm| [w]\in Y_K \bigr\} \subset L^2(K;\mathbb R^d), \qquad \dim Z_K\le q.
\]

\proofheading{Step 2: A local approximation lower bound on each selected element.}
Under the change of variables \(x=\varepsilon_H(z+y)\), let
\[
    \widehat Z_K \coloneqq \bigl\{ z_H(\varepsilon_H(z+\cdot)) \bigm| z_H\in Z_K \bigr\} \subset L^2(S_K;\mathbb R^d).
\]
The pullback preserves the dimension of the space.
Applying \cref{prop:finite-corrector-family} on \(S_K\), using the periodic identity \(g_s(z+y)=g_s(y)\), and scaling the \(L^2\)-norm gives
\[
    \begin{aligned}
        &\sum_{s=0}^q
        \Dist_{L^2(K;\mathbb R^d)}
        \bigl(g_s(x/\varepsilon_H),Z_K\bigr)^2 \\
        &\qquad =
        \varepsilon_H^d
        \sum_{s=0}^q
        \Dist_{L^2(S_K;\mathbb R^d)}(g_s,\widehat Z_K)^2 \\
        &\qquad \ge c_{\mathrm{app}}\varepsilon_H^d
        = c_{\mathrm{elem}}H^d,
    \end{aligned}
\]
where \(c_{\mathrm{elem}}\coloneqq c_{\mathrm{app}}L^d\).
Summing over the disjoint mesh elements in \(\mathcal S_H\) and using \cref{eq:safe-element-count} yields
\[
    \sum_{K\in\mathcal S_H} \sum_{s=0}^q
    \Dist_{L^2(K;\mathbb R^d)}
    \bigl(g_s(x/\varepsilon_H),Z_K\bigr)^2
    \ge c_1,
    \qquad c_1\coloneqq c_{\mathrm{elem}}c_{\mathrm{core}}>0.
\]

\proofheading{Step 3: Transfer to exact solutions.}
For \(s=0,\dots,q\), let \(f_s\) and \(u_{H,s}\) be supplied by \cref{lem:homogenization}.
As above, \(u_{\kappa_{H,s},f_s,H}\) denotes the Galerkin solution in \(V_{H,s}\).
For any closed subspace \(Z\) of a Hilbert space, the distance function is one-Lipschitz.
Thus, for each \(K\in\mathcal S_H\),
\[
    \begin{aligned}
        \Dist_{L^2(K;\mathbb R^d)}(\nabla u_{H,s},Z_K)
        &\ge
        \Bigl(
            \Dist_{L^2(K;\mathbb R^d)}
            \bigl(g_s(x/\varepsilon_H),Z_K\bigr) \\
        &\qquad\qquad
            -\norm{\nabla u_{H,s}-g_s(x/\varepsilon_H)}_{L^2(K)}
        \Bigr)_+ .
    \end{aligned}
\]
The elementary inequality
\[
    (A-B)_+^2\ge\frac12A^2-B^2 \qquad (A,B\ge0)
\]
and \cref{eq:corrector-convergence-on-Q} now give
\begin{equation}
    \begin{aligned}
        &\sum_{s=0}^q \sum_{K\in\mathcal S_H}
        \Dist_{L^2(K;\mathbb R^d)}
        \bigl(\nabla u_{H,s},Z_K\bigr)^2\\
        &\qquad\ge
        \frac12c_1
        -
        \sum_{s=0}^q
        \norm{\nabla u_{H,s}-g_s(x/\varepsilon_H)}_{L^2(Q)}^2 \\
        &\qquad=
        \frac12c_1-o(1).
    \end{aligned}
    \label{eq:transfer-to-exact-solutions}
\end{equation}
The remainder \(o(1)\) is uniform in \(s\) because the family is finite.
Choose \(H_{\mathrm{corr}}>0\) so that, for \(H\in\mathcal H\) with \(0<H<H_{\mathrm{corr}}\), the corrector-error sum in \cref{eq:transfer-to-exact-solutions} is at most \(c_1/4\).
This threshold depends only on the finite profiles \(a_s\), the fixed function \(U\), \(L\), and \(\Omega\), and is independent of the mesh family, \(H\), and the rule.
There is then an index \(s=s(H,\mathscr M)\) such that
\[
    \sum_{K\in\mathcal S_H}
    \Dist_{L^2(K;\mathbb R^d)}
    \bigl(\nabla u_{H,s},Z_K\bigr)^2
    \ge \frac{c_1}{4(q+1)}.
\]

\proofheading{Step 4: From the local approximation bound to the Galerkin error.}
For every \(v_H\in V_{H,s}\), the gradient of its restriction to \(K\) belongs to \(Z_K\).
Moreover, \(\kappa_{H,s}=1\) on every \(K\in\mathcal S_H\).
Hence
\begin{equation}
    \begin{aligned}
        \norm{u_{H,s}-v_H}_{a_{\kappa_{H,s}}}^2 &\ge \sum_{K\in\mathcal S_H} \norm{\nabla(u_{H,s}-v_H)}_{L^2(K)}^2\\
        &\ge \frac{c_1}{4(q+1)}.
    \end{aligned}
    \label{eq:best-approximation-lower-bound}
\end{equation}
Galerkin orthogonality gives the best-approximation identity
\[
    \norm{u_{H,s}-u_{\kappa_{H,s},f_s,H}}_{a_{\kappa_{H,s}}}
    =
    \inf_{v_H\in V_{H,s}}
    \norm{u_{H,s}-v_H}_{a_{\kappa_{H,s}}}.
\]
Combining this identity with \cref{eq:source-norm-bound,eq:best-approximation-lower-bound} proves
\[
    \frac{
        \norm{u_{H,s}-u_{\kappa_{H,s},f_s,H}}_{a_{\kappa_{H,s}}}
    }{
        \norm{f_s}_{L^2(\Omega)}
    }
    \ge \frac1{C_U} \sqrt{\frac{c_1}{4(q+1)}} \eqqcolon c_*>0.
\]
\proofheading{Step 5: Dependencies and quantifiers.}
All thresholds and constants were chosen from the fixed data and the finite family, never from \(H\) or \(\mathscr M\).
Here and above, \(o(1)\) denotes a quantity tending to zero as \(H\to0\) through \(\mathcal H\).
With \(H_0\) from \cref{lem:safe}, set
\[
    H_*\coloneqq\min\{H_0,H_{\mathrm{corr}}\}.
\]
Taking \(s=s(H,\mathscr M)\) proves \cref{eq:finite-family-lower-bound}.
\end{proof}

\begin{proof}[Proof of \cref{thm:main}]
Fix \(H\in\mathcal H\) with \(0<H<H_*\) and a rule \(\mathscr M\).
\cref{prop:finite-family-realization} supplies an index \(s\), a coefficient \(\kappa_H\coloneqq\kappa_{H,s}\in\mathcal K_\rho\), and a nonzero right-hand side \(f_s\) satisfying \cref{eq:finite-family-lower-bound}.
Because the operator error in \cref{eq:operator-error} is the supremum over all nonzero right-hand sides, \cref{eq:finite-family-lower-bound} implies \cref{eq:main-lower-bound}.
Taking the supremum over \(\kappa\), the infimum over rules, and then the lower limit as \(H\to0\) gives \cref{eq:minimax-lower-bound}.
The dependency statement follows from \cref{prop:finite-family-realization}.
\end{proof}

\section{Consequences for scalable construction and scope}
\label{sec:scope}

The theorem gives a necessary condition for coefficient-uniform convergence of deterministic generalized multiscale finite element space constructions over \(\mathcal K_\rho\).
Such a family cannot keep the support radius, coefficient-information radius, and local multiplicity all fixed while satisfying the anchored consistency condition in \cref{def:rule}.
It must allow at least one of these parameters to grow or use coefficient information from outside the prescribed local patches.
This is a necessary condition on the construction, not a claim that growth of any particular parameter is sufficient and not a runtime lower bound.
Its precise boundary is determined by the order in which the rule, the coefficient, and the trial space are chosen.

\subsection{Exact order of choices}

The stronger finite-family statement in \cref{prop:finite-family-realization} can be read in three steps.
First, the structural parameters are fixed, and the proof chooses one scale factor \(L>1\), a finite family
\[
    \{(a_s,f_s)\}_{s=0}^q
    \subset C^\infty_{\Per}(Y)
    \times \bigl(C_{\mathrm c}^\infty(\Omega)\setminus\{0\}\bigr),
\]
of smooth periodic coefficient profiles \(a_s\) and smooth compactly supported right-hand sides \(f_s\), together with constants \(c_*,H_*>0\).
Second, an arbitrary scale \(H\in\mathcal H\), with \(0<H<H_*\), and an arbitrary fixed-visibility rule \(\mathscr M\) are fixed.
Third, one index \(s=s(H,\mathscr M)\) is selected from the already fixed family.
With
\[
    \kappa_{H,s}(x)=a_s\bigl(x/(LH)\bigr),
    \qquad
    V_{H,s}=V_H^\mathscr M(\kappa_{H,s}),
\]
the corresponding Galerkin solution satisfies
\[
    \frac{
        \norm{u_{\kappa_{H,s},f_s}-u_{\kappa_{H,s},f_s,H}}_{a_{\kappa_{H,s}}}
    }{
        \norm{f_s}_{L^2(\Omega)}
    }
    \ge c_*.
\]
Thus the index \(s\) may depend on the scale and on the rule, but the finite coefficient profiles, the right-hand sides, and the lower-bound constant do not.

The quantities \(L\), \(c_*\), \(H_*\), and the finite family depend only on
\[
    d,\rho,m,k,C_{\mathrm{loc}},\Omega,
    \gamma_{\mathrm{sh}},c_{\mathrm{qu}}.
\]
They are independent of \(H\), the rule, the particular mesh family satisfying the stated regularity bounds, and the global dimension constant \(C_{\mathrm{dim}}\).
Only the local anchor multiplicity \(C_{\mathrm{loc}}\) enters the proof.
The construction applies to every \(\rho>1\), however close to one.
The restriction \(d\ge2\) is used only to construct the independent periodic fields in \cref{lem:dipoles}.

\subsection{Global coefficient information lies outside the model}

For fixed visibility, one rule must act consistently on the entire coefficient class before the coefficient used in the lower bound is selected.
The relevant worst-case quantity is
\[
    \inf_{\mathscr M\ \text{fixed-visibility}}
    \sup_{\kappa\in\mathcal K_\rho}
    \mathcal E_H\bigl(\kappa;V_H^\mathscr M(\kappa)\bigr).
\]
If only support locality is imposed, the whole coefficient may be examined before the trial space is designed.
The corresponding order of optimization is
\[
    \sup_{\kappa\in\mathcal K_\rho}
    \inf_{(V_H,\Psi_H)\ \text{locally supported}}
    \mathcal E_H(\kappa;V_H),
\]
where the support radius, dimension bound, and local multiplicity are understood to be fixed.

These two quantities differ by more than notation.
In the fixed-visibility model, coefficients that agree on a coefficient-information patch must produce the same anchored local space; this is the common-local-space property in \cref{eq:common-safe-restriction}.
With global coefficient information, a different compactly supported basis may be designed for each coefficient, so the local spaces need not agree across the finite family used in the proof.
The common-local-space argument is then unavailable.

Accordingly, \cref{thm:main} proves an order-one lower bound for fixed-visibility rules, but it neither proves nor disproves the support-only conjecture.
The latter remains open because support locality alone does not restrict how much coefficient information may be encoded in the shape of a local basis function.

\section{Conclusion}

Within the fixed-visibility model, the central scalability question has a negative answer.
No deterministic generalized multiscale finite element space construction can achieve a coefficient-uniform optimal \(\bigO(H)\) error over \(\mathcal K_\rho\) while its support radius, coefficient-information radius, and local multiplicity all remain bounded independently of \(H\).
The failure is stronger than loss of the optimal rate.
The worst-case normalized Galerkin error remains of order one.

The proof identifies why fixed visibility is restrictive.
A finite family of coefficients can coincide on the coefficient-information patches associated with a positive fraction of the mesh while producing more independent local corrector fields than the fixed-dimensional local spaces can approximate.
Strong corrector convergence transfers this local approximation lower bound to an order-one global error for exact solutions with smooth right-hand sides.
Thus a uniformly convergent deterministic construction must allow the support radius, coefficient-information radius, or local multiplicity to grow, or it must obtain coefficient information beyond the prescribed local patches.

The result should not be read as an impossibility theorem for every compactly supported multiscale basis.
Global coefficient information can be encoded in the shape of a locally supported function, and such constructions are not constrained by the common-local-space argument used here.
Whether spaces with fixed support radius and fixed local dimension, designed using global coefficient information, can attain a uniform \(\bigO(H)\) error over the full coefficient class remains unresolved.
The present theorem gives a rigorous error lower bound under fixed visibility and identifies removal of the local coefficient-information restriction as the case not covered by the argument.



\section*{Acknowledgments}
The key construction was developed with assistance from OpenAI Codex.
The author independently verified the proof, prepared the final manuscript, and assumes full responsibility for its mathematical content.
The author thanks Eric Chung, Daniel Peterseim, and Chupeng Ma for helpful discussions on this topic.

\printbibliography

\end{document}